\definecolor{grey}{rgb}{0.86, 0.86, 0.86} 
\definecolor{red2}{rgb}{0.86, 0.66, 0.86} 
\newcommand{\bfb}{{\mathbf b}}
\newcommand{\bfk}{{\mathbf k }}
\newcommand{\bfp}{{\mathbf p}}
\newcommand{\bfq}{{\mathbf q }}
\newcommand{\bfu}{{\mathbf u}}
\newcommand{\bfv}{{\mathbf v}}
\newcommand{\NN}{{\mathbb N}}
\newcommand{\ZZ}{{\mathbb Z}}
\newcommand{\frakC}{{\mathfrak C}}
\newcommand{\calP}{{\mathcal P}}
\newcommand{\calS}{{\mathcal S}}
\newcommand{\bfP}{{\mathbf P}}
\newcommand{\scP}{{\mathscr P}}
\newcommand{\sign}{{\operatorname{sign}}}
\newcommand{\sgn}{{\operatorname{sgn}}}
\newcommand{\SP}{\calS\!\calP}
\newcommand{\SST}{\operatorname{SST}}
\newcommand{\Pf}{\operatorname{Pf}}
\newcommand{\MST}{\operatorname{MST}}
\newcommand{\GF}{{GF}}
\newtheorem{thm}{Theorem}[section]  
\newtheorem{lem}[thm]{Lemma}  
\newtheorem{prop}[thm]{Proposition} 
\newtheorem{df-pr}[thm]{Definition-Proposition}
\theoremstyle{definition} 
\newtheorem{defn}[thm]{Definition}
\newtheorem{rem}[thm]{Remark}
\newtheorem{exm}[thm]{Example}
\numberwithin{equation}{section} 
\begin{document} 
\title{A tableau formula for vexillary Schubert polynomials in type C}
\author{Tomoo Matsumura}
\maketitle 
\begin{abstract}
Ikeda--Mihalcea--Naruse's double Schubert polynomials \cite{IkedaMihalceaNaruse} represent the equivariant cohomology classes of Schubert varieties in the type C flag varieties. The goal of this paper is to obtain a new tableau formula of these polynomials associated to {\it vexillary signed permutations} introduced by Anderson--Fulton \cite{AndersonFulton2}. To achieve that goal, we introduce {\it flagged factorial (Schur) $Q$-functions}, combinatorially defined functions in terms of {\it marked shifted tableaux for flagged strict partitions}, and prove their Schur--Pfaffian formula.  As an application, we also obtain a new combinatorial formula of factorial $Q$-functions of Ivanov \cite{Ivanov04} in which {\it monomials} bijectively correspond to flagged marked shifted tableaux. 
\end{abstract}


\section{Introduction}
Ikeda--Mihalcea--Naruse \cite{IkedaMihalceaNaruse} introduced the {\it double Schubert polynomials} of type $C$ (also $B$ and $D$) by extending Billey--Haiman \cite{BilleyHaiman}'s construction for the single case. These polynomials  represent the equivariant cohomology classes of Schubert varieties in type $C$ flag varieties. One can express the ones corresponding to Lagrangian Grassmannians in terms of the Schur-Pfaffian by the work of Kazarian \cite{Kazarian} and Ikeda \cite{Ikeda2007}, and they coincide with the {\it factorial Schur $Q$-functions} of Ivanov \cite{Ivanov04} defined in terms of {\it marked shifted tableaux} of strict partitions (cf. \cite{MacdonaldHall}). Note that the corresponding fact for the single case was established in the earlier work of Pragacz \cite{PragaczPQ}. In \cite{AndersonFulton, AndersonFulton2}, Anderson--Fulton introduced {\it vexillary signed permutations}, a family of signed permutations containing the Lagrangian ones, and showed that the associated double Schubert polynomials can be also expressed in the Schur-Pfaffian formula. The goal of this paper is to give a new tableau formula for this family of double Schubert polynomials, by extending the notion of marked shifted tableaux and Ivanov's factorial Schur $Q$-functions.

Our study is motivated by the analogy in type A. Lascoux--Sch\"{u}tzenberger's double Schubert polynomials \cite{ClassesLascoux, SchubertLascoux, LascouxSchutzenberger1985} represent the equivariant cohomology classes of Schubert varieties of type $A$ flag varieties, due to Fulton \cite{FlagsFulton}. A family of permutations including Grassmannian ones, now called {\it vexillary permutations}, was singled out by Lascoux and their associated double Schubert polynomials are given in a Jacobi--Trudi type determinant formula.  It is worth mentioning that the ones associated to Grassmannians coincide with the factorial Schur polynomials essentially introduced and studied by Biedenharn--Louck \cite{BiedenharnLouck}. The flagged double (or factorial) Schur polynomials generalize such kind of double Schubert polynomials and are defined either by flagged determinant formula or by flagged semistandard tableaux for a partition by the work of Chen--Li--Louck \cite{ChenLiLouck} (for the single case, see Gessel--Viennot \cite{GesselViennot}, and Wachs \cite{Wachs}).

Below we explain our main results in more detail. Let $\lambda=(\lambda_1,\dots,\lambda_r)$ be a strict partition of length $r$, {\it i.e.}, a strictly decreasing sequence of  $r$ positive integers. We identify it with its {\it shifted Young diagram}, obtained from the usual Young diagram by shifting the $i$-th row $(i-1)$ boxes to the right, for each $i\geq 1$. Let $f=(f_1,\dots, f_r)$ be a sequence of nonnegative integers. We call $f$ a  {\it flagging} of $\lambda$ and the pair $(\lambda,f)$ a {\it flagged strict partition}. Consider the ordered set of alphabets: {\it unmarked} numbers $1,2,\dots$ and {\it primed} numbers $1',2',\dots$ with $1'<1<2'<2<\cdots$. The classical {\it marked shifted tableau} $T$ of $\lambda$ is an assignment of such an alphabet to each box of the diagram subject to the rules: (1) assigned alphabets are weakly increasing in each column and row; (2) unmarked numbers are strictly increasing in each column; (3) primed numbers are strictly increasing in each row. In order to extend this notion, we add, to the above ordered set of alphabets, {\it circled} numbers $1^{\circ}<2^{\circ}<\cdots $ which are greater than any unmarked and primed number. We define a {\it (flagged) marked shifted tableau} of
$(\lambda,f)$ to be an assignment of an alphabet to each box of $\lambda$ with rules: in addition to (1), (2), and (3), we require (4) circled numbers are strictly increasing in each row, and (5) alphabets in the $i$-th row are at most $f_i^{\circ}$. We denote the set of all marked shifted tableaux of $(\lambda,f)$ by $\MST(\lambda,f)$.

A {\it signed permutation} $w$ is a permutation on the set $\{1,2,\dots \} \cup \{-1,-2,\dots\}$ such that $w(i)\not= i$ for only finitely many $i$, and $\overline{w(i)}=w(\bar i)$ where we denote $\bar i = -i$. Let $x=(x_i)_{i\in \NN}, z=(z_i)_{i\in \NN}, b=(b_i)_{i\in \NN}$. The double Schubert polynomial associated to a signed permutation $w$ is denoted by $\frakC_w(x;z|b)$. Note that the variables $b$ coincide with $-t$ in the notation of \cite{IkedaMihalceaNaruse}. 

If a signed permutation $w$ is {\it vexillary} in the sense of Anderson--Fulton \cite{AndersonFultonVex}, there is a unique flagged strict partition $(\lambda,f)$. For each $T \in \MST(\lambda,f)$, we assign
\[
(xz|b)^T = \prod_{k \in T}\left(x_k+b_{c(k)-r(k)}\right)\cdot \prod_{k' \in T}\left(x_k-b_{c(k')-r(k')}\right) \cdot \prod_{k^{\circ} \in T} \left(z_k+b_{k+r(k^{\circ})-c(k^{\circ})}\right),
\]
where $r(\ )$ and $c(\ )$ denote the column and row indices of the entry respectively, and we set $b_{-i}:=-b_{i+1}$ for all $i\geq0$. 

Our main result is as follows.

\vspace{3mm}

\noindent{\bf Theorem A} (Theorem \ref{thmmain2}).
{\it
Let $w$ be a vexillary signed permutation in the sense of Anderson--Fulton \cite{AndersonFultonVex} and $(\lambda,f)$ the corresponding flagged strict partition. Then we have
\begin{equation}\label{eqintro1}
\frakC_w(x;z|b) = \sum_{T\in \MST(\lambda,f)} (xz|b)^T.
\end{equation}
}

\vspace{0mm}

For a general flagged strict partition $(\lambda,f)$, we denote by $Q_{\lambda,f}(x;z|b)$ the function defined by the right hand side of (\ref{eqintro1}). We call it a {\it flagged factorial $Q$-function}, since it is nothing but the original definition of Ivanov's factorial $Q$-functions $Q_{\lambda}(x|b)$ when $f=(0,\dots,0)$. The  proof of Theorem A is based on the following Schur--Pfaffian formula of $Q_{\lambda,f}(x;z|b)$ which generalizes the corresponding formula of $Q_{\lambda}(x|b)$ when $f=(0,\dots,0)$ in \cite[Theorem 9.1]{Ivanov04}.

\vspace{3mm}

\noindent{\bf Theorem B} (Theorem \ref{mainthm}).
{\it
Let $(\lambda,f)$ be a flagged strict partition of length $r$. Suppose that $0<\lambda_i-f_i\leq \lambda_j-f_j$ for all $i<j$. The we have
\[
Q_{\lambda,f}(x,z|b)=\Pf\left[q_{\lambda_1}^{[f_1|\lambda_1-f_1-1]}q_{\lambda_2}^{[f_2|\lambda_2-f_2-1]}\dots q_{\lambda_r}^{[f_r|\lambda_r-f_r-1]}\right],
\]
where $\Pf$ is the Schur--Pfaffian defined in \S \ref{secPf}, and the function $q_m^{[k|\ell]}=q_m^{[k|\ell]}(x;z|b)$ is  defined by
\[
\sum_{m\geq 0} q_m^{[k|\ell]} u^m := \left(\prod_{i\geq 1} \frac{1+x_iu}{1-x_iu}\right)e^{[k]}_u(z) e^{[\ell]}_u(b), \ \ \ \ 
e^{[k]}_u(z)=\begin{cases}
\displaystyle\prod_{i=1}^k(1+z_i u) & (k\geq 0),\\
\displaystyle\prod_{i=1}^{|k|}\frac{1}{1-z_i u}& (k\leq 0).
\end{cases}
\]
}

\vspace{0mm}

As an application of Theorem A, we obtain a new tableau formula of Ivanov's factorial $Q$-function $Q_{\lambda}(x|b)$. Ikeda--Mihalcea--Naruse \cite{IkedaMihalceaNaruse} showed that $\frakC_w(x;z|b) = \frakC_{w^{-1}}(x;b|z)$ and Anderson--Fulton \cite{AndersonFultonVex} showed that if $w$ is vexillary, then so is $w^{-1}$. If $w$ is Lagrangian with strict partition $\lambda$ of length $r$, we can see that the strict partition of $w^{-1}$ is also $\lambda$ and its flag is $f=(\lambda_1-1, \dots, \lambda_r-1)$. All together we obtain

\vspace{3mm}

\noindent{\bf Theorem C} (Theorem \ref{thmmain3}).
{\it
Let $\lambda$ be a strict partition of length $r$ and $f=(\lambda_1-1,\dots,\lambda_r-1)$, then we have
\[
Q_{\lambda}(x|b) =  \sum_{T\in \MST(\lambda,f)} (xb)^T
\]
where $(xb)^T$ is the monomial given by
\[
(xb)^T=\prod_{k\in T} x_k \prod_{k'\in T} x_k  \prod_{k^{\circ}\in T} b_k.
\]
}
Anderson--Fulton \cite{AndersonFulton2} also introduced a larger family of {\it theta-vexillary signed permutations} (Lambert \cite{Lambert}), containing the $k$-Grassmannian signed permutations. They obtained the theta-polynomial (or raising operator, Pfaffian-sum) formula of double Schubert polynomials associated to such elements, extending the ones for $k$-Grassmannians (\cite{BuchKreschTamvakis1, WilsonThesis, IkedaMatsumura}). The combinatorial aspect of these signed permutation is far more complicated than the vexillary ones. In particular, it is worth mentioning that there is a tableau formula of the corresponding {\it single} Schubert polynomials associated to $k$-Grassmannian signed permutations, due to Tamvakis \cite{Tamvakis2011Crelle}. 
Since some of those polynomials can also be given in terms of the tableaux introduced in this paper, it is an interesting problem to find the relation to these expressions and to extend the formula to all theta-vexillary double Schubert polynomials. 

This paper is organized as follows. 
In Section \ref{secprelim}, we introduce a few basic functions and set up an algebraic framework to study double Schubert polynomials and the combinatorially defined functions defined in this paper. 
In Section \ref{secffQ}, we introduce flagged marked shifted tableaux and the functions defined by them. We prove a few basic formulas that will be used in the proof of Theorem B.
In Section \ref{secPf}, we review the definition of Schur-Pfaffian and prove Theorem B.
In Section \ref{secSchPol}, we first recall the basic fact about double Schubert polynomials and vexillary signed permutations, following Ikeda--Mihalcea--Naruse \cite{IkedaMihalceaNaruse} and Anderson--Fulton \cite{AndersonFultonVex}. We explain how Theorem A and Theorem C follow from Theorem B. 
In the appendix, we give a proof of a Jacobi--Trudi type formula of row-strict skew Schur polynomials, extending the work of Wachs \cite{Wachs} and Chen--Li--Louck \cite{ChenLiLouck}. This formula is used in the proof of Theorem B. 

\section{Preliminary}\label{secprelim}
Before we proceed with our main object of interest, we prepare the notations for a few basic functions. The goal is to set an algebraic framework in which we can study combinatorially defined functions. In particular, our Pfaffian formula of the vexillary double Schubert polynomials (and also the factorial flagged $Q$-functions) will be in terms of the basic functions that we review here.  We use infinite sequences of variables,  $x=(x_i)_{i\in \NN}, z=(z_i)_{i\in \NN}$, and $b=(b_i)_{i\in \NN}$.

We define functions $q_m=q_m(x)$ in the $x$-variables for integers $m\geq 0$ by the generating function 
\[
q_u(x)= \sum_{m\geq 0} q_m(x) u^m := \prod_{i\geq 1} \frac{1+x_iu}{1-x_iu},
\]
where $u$ is a formal variable. For each integer $k$, we also define polynomials $e^{[k]}_m(b)$ in the $b$-variables for $m\geq 0$ by
\[
e^{[k]}_u(b)=\sum_{m\geq 0} e^{[k]}_m(b)u^m:=\begin{cases}
\displaystyle\prod_{i=1}^k(1+b_i u) & (k\geq 0)\\
\displaystyle\prod_{i=1}^{|k|}\frac{1}{1-b_i u}& (k\leq 0).
\end{cases}
\]
The polynomials $e^{[k]}_m(b)$ and $e^{[-k]}_m(b)$ are nothing but the elementary and complete symmetric polynomials of degree $m$ in $b_1,\dots, b_k$ respectively. 

For integers $k, \ell\in \ZZ$, we set
\begin{eqnarray*}
e^{[k|\ell]}_u(z|b)&=&\sum_{m\geq 0} e_m^{[k|\ell]}(z|b) u^m := e^{[k]}_u(z) e^{[\ell]}_u(b),\\
q^{[\ell]}_u(x|b)&=&\sum_{m\geq 0} q_m^{[\ell]}(x|b) u^m := q_u(x)e^{[\ell]}_u(b),\\
q^{[k|\ell]}_u(x;z|b)&=&\sum_{m\geq 0} q_m^{[k|\ell]}(x;z|b) u^m := q_u(x)e^{[k]}_u(z) e^{[\ell]}_u(b).
\end{eqnarray*}
We will also denote $h^{[k|\ell]}_m(z|b):=e^{[-k|-\ell]}_m(z|b)$. Moreover, we often suppress the variables when it is clear from the context, {\it e.g.}, $e^{[-k|-\ell]}_m=e^{[-k|-\ell]}_m(z|b)$, $q_m^{[k|\ell]}=q_m^{[k|\ell]}(x;z|b)$, and so on. 

Occasionally we use the infinite sequence of variables $\bfb=(b_i)_{i\in \ZZ}$. With this extended sequence of $b$-variables in mind, we will use the following index shifting operator $\tau$. For each integer $k\in \ZZ$, let $\tau^k(b)$ be the sequence of variables defined by
\[
\tau^k(b) = (b_{1+k},b_{2+k},b_{3+k},\dots).
\]
Similarly $\tau^k(\bfb)$ denotes the sequence of variables such that its $i$-th variable is $b_{i+k}$ for $i\in \ZZ$.

We consider the ring $\Gamma=\ZZ[q_1,q_2,\dots]$. We should note that this is not a polynomial ring since $q_i$'s are not algebraically independent. It is well-known that $\Gamma$ has a $\ZZ$-basis consisting of Schur $Q$-functions $Q_{\lambda}(x)$ (cf. \cite{MacdonaldHall}). It is also worth mentioning that Ivanov's factorial $Q$-functions $Q_{\lambda}(x|b)$ \cite{Ivanov04} form a $\ZZ[b]$-basis of the $\ZZ[b]$-algebra $\Gamma[b]:=\Gamma\otimes_{\ZZ}\ZZ[b]$ where $\ZZ[b]$ denotes the polynomial ring in $b$-variables. All functions defined above are regarded as elements of 
\[
\Gamma[z,b]:= \Gamma \otimes_{\ZZ} \ZZ[z]\otimes_{\ZZ} \ZZ[b]. 
\]

\section{Flagged factorial $Q$-functions}\label{secffQ}
In this section, we introduce {\it flagged factorial $Q$-functions} $Q_{\lambda,f}(x;z|b)$ based on the notion of {\it marked shifted tableaux} of flagged strict partitions $(\lambda,f)$. We will also discuss basic formulas that will be used in the proof of Schur--Pfaffian formula for $Q_{\lambda,f}(x;z|b)$ in the next section.

\subsection{Definition of tableaux and functions}\label{fst}
A {\it strict partition} $\lambda=(\lambda_1,\lambda_2,\dots)$ is a sequence of non-negative integers such that $\lambda_i>\lambda_{i+1}$ if $\lambda_i\not=0$ and the number of positive integers in $\lambda$, called the {\it length} of $\lambda$, is finite. We also denote a strict partition of length $r$ as a finite sequence of $r$ positive integers $\lambda=(\lambda_1,\dots,\lambda_r)$ and identify it with its {\it shifted Young digram}, obtained from the usual Young diagram by shifting the $i$-th row $(i-1)$ boxes to the right, for $1 \leq i\leq r$. Let $\SP$ be the set of all strict partitions and $\SP_r$ the set of all strict partition of length at most $r$.

Consider the order set $\bfP$ of {\it alphabets}, consisting of {\it unmarked} numbers $1,2,\dots$, {\it primed} numbers $1',2',\dots$, and {\it circled} numbers $1^{\circ},2^{\circ},\dots$, where the total order is given by
 \[
 1'<1<2'<2 < 3'<3<\cdots < 1^{\circ}<2^{\circ}<\cdots.
 \]
For a given strict partition $\lambda$ of length $r$, a {\it flagging} of $\lambda$ is a sequence $f=(f_1,\dots, f_r)$ of non-negative integers. We call the pair $(\lambda, f)$ a {\it flagged strict partition}.  

\begin{defn}
A {\it (flagged) marked shifted tableau} of a flagged strict partition $(\lambda,f)$ is a filling of the shifted Young diagram of $\lambda$ which assigns an alphabet in $\bfP$ to each box, subject to the rules
\begin{enumerate}
\item alphabets are weakly increasing in each row and column,
\item unmarked numbers are strictly increasing in each column,
\item primed numbers are strictly increasing in each row,
\item circled numbers are strictly increasing in each row, and, 
\item for $1\leq i \leq r$,  one can assign alphabets at most $f_i^{\circ}$ in the $i$-th row.
\end{enumerate}
\end{defn}
\begin{rem}\label{remMST}
It is worth noting that, by the total order of $\bfP$ and the rule (1),  the part consisting of unmarked and primed numbers forms the usual marked shifted tableaux of the shifted Young diagram of a strict partition (cf. \cite[p.256]{MacdonaldHall}). It is also clear from the order of $\bfP$ that the part consisting of circled numbers forms a row-strict semistandard Young tableau of a skew shape $\lambda/\mu$ given by  a strict partition $\mu \subset \lambda$.
\end{rem}
\begin{exm}
Let $\lambda=(5,3,1)$ and $f=(2,1,0)$. The following are examples of marked shifted tableaux of the flagged strict partition $(\lambda,f)$:
\setlength{\unitlength}{0.5mm}
\begin{center}
\begin{picture}(60,40)
\put(00,30){\line(1,0){50}}\put(00,20){\line(1,0){50}}\put(10,10){\line(1,0){30}}\put(20,00){\line(1,0){10}}\put(00,30){\line(0,-1){10}}\put(10,30){\line(0,-1){20}}\put(20,30){\line(0,-1){30}}\put(30,30){\line(0,-1){30}}\put(40,30){\line(0,-1){20}}\put(50,30){\line(0,-1){10}}
\put(03,22){{\small $1$}}\put(13,22){{\small $2'$}}\put(23,22){{\small $2$}}\put(33,22){{\small $2$}}\put(43,22){{\small $3'$}}
                                       \put(13,12){{\small $2'$}}\put(23,12){{\small $3$}}\put(33,12){{\small $4$}}
                                                                              \put(23,02){{\small $4'$}}
\end{picture}
\ \ 
\begin{picture}(60,40)
\put(00,30){\line(1,0){50}}\put(00,20){\line(1,0){50}}\put(10,10){\line(1,0){30}}\put(20,00){\line(1,0){10}}\put(00,30){\line(0,-1){10}}\put(10,30){\line(0,-1){20}}\put(20,30){\line(0,-1){30}}\put(30,30){\line(0,-1){30}}\put(40,30){\line(0,-1){20}}\put(50,30){\line(0,-1){10}}
\put(03,22){{\small $1$}}\put(13,22){{\small $2'$}}\put(23,22){{\small $2$}}\put(33,22){{\small $2$}}\put(43,22){{\small $1^{\circ}$}}
                                       \put(13,12){{\small $2'$}}\put(23,12){{\small $3$}}\put(33,12){{\small $1^{\circ}$}}
                                                                              \put(23,02){{\small $4'$}}
\end{picture}
\ \ 
\begin{picture}(60,40)
\put(00,30){\line(1,0){50}}\put(00,20){\line(1,0){50}}\put(10,10){\line(1,0){30}}\put(20,00){\line(1,0){10}}\put(00,30){\line(0,-1){10}}\put(10,30){\line(0,-1){20}}\put(20,30){\line(0,-1){30}}\put(30,30){\line(0,-1){30}}\put(40,30){\line(0,-1){20}}\put(50,30){\line(0,-1){10}}
\put(03,22){{\small $1$}}\put(13,22){{\small $2'$}}\put(23,22){{\small $2$}}\put(33,22){{\small $1^{\circ}$}}\put(43,22){{\small $2^{\circ}$}}
                                       \put(13,12){{\small $2'$}}\put(23,12){{\small $3$}}\put(33,12){{\small $1^{\circ}$}}
                                                                              \put(23,02){{\small $4'$}}
\end{picture}
\end{center}
The following are non-examples due to rules (2), (5), (4), respectively:
\begin{center}
\begin{picture}(60,40)
\put(00,30){\line(1,0){50}}\put(00,20){\line(1,0){50}}\put(10,10){\line(1,0){30}}\put(20,00){\line(1,0){10}}\put(00,30){\line(0,-1){10}}\put(10,30){\line(0,-1){20}}\put(20,30){\line(0,-1){30}}\put(30,30){\line(0,-1){30}}\put(40,30){\line(0,-1){20}}\put(50,30){\line(0,-1){10}}
\put(03,22){{\small $1$}}\put(13,22){{\small $2'$}}\put(23,22){{\small $2$}}\put(33,22){{\small $2$}}\put(43,22){{\small $3'$}}
                                       \put(13,12){{\small $2'$}}\put(23,12){{\small $2$}}\put(33,12){{\small $4$}}
                                                                              \put(23,02){{\small $4'$}}
\end{picture}
\ \ 
\begin{picture}(60,40)
\put(00,30){\line(1,0){50}}\put(00,20){\line(1,0){50}}\put(10,10){\line(1,0){30}}\put(20,00){\line(1,0){10}}\put(00,30){\line(0,-1){10}}\put(10,30){\line(0,-1){20}}\put(20,30){\line(0,-1){30}}\put(30,30){\line(0,-1){30}}\put(40,30){\line(0,-1){20}}\put(50,30){\line(0,-1){10}}
\put(03,22){{\small $1$}}\put(13,22){{\small $2'$}}\put(23,22){{\small $2$}}\put(33,22){{\small $2$}}\put(43,22){{\small $1^{\circ}$}}
                                       \put(13,12){{\small $2'$}}\put(23,12){{\small $3$}}\put(33,12){{\small $1^{\circ}$}}
                                                                              \put(23,02){{\small $1^{\circ}$}}
\end{picture}
\ \ 
\begin{picture}(60,40)
\put(00,30){\line(1,0){50}}\put(00,20){\line(1,0){50}}\put(10,10){\line(1,0){30}}\put(20,00){\line(1,0){10}}\put(00,30){\line(0,-1){10}}\put(10,30){\line(0,-1){20}}\put(20,30){\line(0,-1){30}}\put(30,30){\line(0,-1){30}}\put(40,30){\line(0,-1){20}}\put(50,30){\line(0,-1){10}}
\put(03,22){{\small $1$}}\put(13,22){{\small $2'$}}\put(23,22){{\small $2$}}\put(33,22){{\small $1^{\circ}$}}\put(43,22){{\small $1^{\circ}$}}
                                       \put(13,12){{\small $2'$}}\put(23,12){{\small $3$}}\put(33,12){{\small $1^{\circ}$}}
                                                                              \put(23,02){{\small $4'$}}
\end{picture}
\end{center}
\end{exm}

We call the alphabet assigned to a box of $\lambda$ by $T$ an {\it entry} of $T$, and denote it by $e\in T$. Abusing the notation slightly, we often write those entries by their assigned alphabets and denote the numeric value of an entry $e\in T$ by $|e|$, {\it i.e.}, $k,k',k^{\circ}\in T$ and $|k|=|k'|=|k^{\circ}|=k$. Let $c(e)$ and $r(e)$ be the column and row indices of an entry $e$ respectively.  Let $\MST(\lambda,f)$ be the set of all marked shifted tableaux of $(\lambda,f)$. If $f=(0,\dots, 0)$, we denote $\MST(\lambda)$ instead of $\MST(\lambda,f)$.

\begin{defn}\label{df2-1}
Consider the infinite sequence of variables $x=(x_i)_{i\in \NN}, z=(z_i)_{i\in \NN}, b=(b_i)_{i\in \NN}$ as before.  Let $(\lambda,f)$ be a flagged strict partition. To each $T \in \MST(\lambda,f)$, we assign the {\it weight}
\[
(xz|b)^T = \prod_{k \in T}\left(x_k+b_{c(k)-r(k)}\right)\cdot \prod_{k' \in T}\left(x_k-b_{c(k')-r(k')}\right) \cdot \prod_{k^{\circ} \in T} \left(z_k+b_{k+r(k^{\circ})-c(k^{\circ})}\right)
\]
where we set $b_{-i}:=-b_{i+1}$ for all $i\geq0$. We define the {\it flagged factorial $Q$-function} $Q_{\lambda,f}(x;z|b)$ by
\[
Q_{\lambda,f}(x;z|b) = \sum_{T\in \MST(\lambda,f)} (xz|b)^T.
\]
\end{defn}
\begin{rem}
When $f=(0,\dots, 0)$, the $z$-variables are not involved and $Q_{\lambda,f}(x;z|b)$ coincides with Ivanov's factorial $Q$-function $Q_{\lambda}(x|b)$ \cite{Ivanov04}:
\[
Q_{\lambda}(x|b) = \sum_{T\in \MST(\lambda)} (x|b)^T, \ \ \ (x|b)^T = \prod_{k \in T}\left(x_k+b_{c(k)-r(k)}\right)\cdot \prod_{k' \in T}\left(x_k-b_{c(k')-r(k')}\right).
\]
Furthermore, in view of Remark \ref{remMST}, $Q_{\lambda,f}(x;z|b)$ can be expanded in terms of $Q_{\mu}(x|b)$ for strict partitions $\mu\subset \lambda$. This expansion will be discussed in the next subsection.
\end{rem}
\begin{exm}\label{remMST2}
Let $\lambda=(3,1)$ and $f=(1,0)$. In this case, $\MST(\lambda,f)$ can be divided into two families of tableaux 
\setlength{\unitlength}{0.5mm}
\begin{center}
\begin{picture}(30,20)
\put(00,20){\line(1,0){30}}\put(00,10){\line(1,0){30}}\put(10,00){\line(1,0){10}}\put(00,20){\line(0,-1){10}}\put(10,20){\line(0,-1){20}}\put(20,20){\line(0,-1){20}}\put(30,20){\line(0,-1){10}}
\put(03,12){{\small $*$}}\put(13,12){{\small $*$}}\put(23,12){{\small $*$}}
                                      \put(13,02){{\small $*$}}
\end{picture}
\ \ \ \ \ \ 
\begin{picture}(30,20)
\put(00,20){\line(1,0){30}}\put(00,10){\line(1,0){30}}\put(10,00){\line(1,0){10}}\put(00,20){\line(0,-1){10}}\put(10,20){\line(0,-1){20}}\put(20,20){\line(0,-1){20}}\put(30,20){\line(0,-1){10}}
\put(03,12){{\small $*$}}\put(13,12){{\small $*$}}\put(22,12){{\small $1^{\circ}$}}
                                      \put(13,02){{\small $*$}}
\end{picture}
\end{center}
where the part with $*$ consists of unmarked and primed numbers. Thus we have 
\begin{eqnarray*}
Q_{\lambda,f}(x;z|b) &=& Q_{31}(x|b) + Q_{21}(x|b) (z_1 - b_2) 
\end{eqnarray*}
Similarly, if $\lambda=(5,3,1)$ and $f=(2,1,0)$, we have
\setlength{\unitlength}{0.5mm}
\begin{center}
\begin{picture}(50,40)
\put(00,30){\line(1,0){50}}\put(00,20){\line(1,0){50}}\put(10,10){\line(1,0){30}}\put(20,00){\line(1,0){10}}\put(00,30){\line(0,-1){10}}\put(10,30){\line(0,-1){20}}\put(20,30){\line(0,-1){30}}\put(30,30){\line(0,-1){30}}\put(40,30){\line(0,-1){20}}\put(50,30){\line(0,-1){10}}
\put(03,22){{\small $*$}}\put(13,22){{\small $*$}}\put(23,22){{\small $*$}}\put(33,22){{\small $*$}}\put(43,22){{\small $*$}}
                                       \put(13,12){{\small $*$}}\put(23,12){{\small $*$}}\put(33,12){{\small $*$}}
                                                                              \put(23,02){{\small $*$}}
\end{picture}
\ \ 
\begin{picture}(50,40)
\put(00,30){\line(1,0){50}}\put(00,20){\line(1,0){50}}\put(10,10){\line(1,0){30}}\put(20,00){\line(1,0){10}}\put(00,30){\line(0,-1){10}}\put(10,30){\line(0,-1){20}}\put(20,30){\line(0,-1){30}}\put(30,30){\line(0,-1){30}}\put(40,30){\line(0,-1){20}}\put(50,30){\line(0,-1){10}}
\put(03,22){{\small $*$}}\put(13,22){{\small $*$}}\put(23,22){{\small $*$}}\put(33,22){{\small $*$}}\put(43,22){{\small $k^{\circ}$}}
                                       \put(13,12){{\small $*$}}\put(23,12){{\small $*$}}\put(33,12){{\small $*$}}
                                                                              \put(23,02){{\small $*$}}
\end{picture}
\ \ 
\begin{picture}(50,40)
\put(00,30){\line(1,0){50}}\put(00,20){\line(1,0){50}}\put(10,10){\line(1,0){30}}\put(20,00){\line(1,0){10}}\put(00,30){\line(0,-1){10}}\put(10,30){\line(0,-1){20}}\put(20,30){\line(0,-1){30}}\put(30,30){\line(0,-1){30}}\put(40,30){\line(0,-1){20}}\put(50,30){\line(0,-1){10}}
\put(03,22){{\small $*$}}\put(13,22){{\small $*$}}\put(23,22){{\small $*$}}\put(33,22){{\small $*$}}\put(43,22){{\small $*$}}
                                       \put(13,12){{\small $*$}}\put(23,12){{\small $*$}}\put(33,12){{\small $1^{\circ}$}}
                                                                              \put(23,02){{\small $*$}}
\end{picture}
\ \ 
\begin{picture}(50,40)
\put(00,30){\line(1,0){50}}\put(00,20){\line(1,0){50}}\put(10,10){\line(1,0){30}}\put(20,00){\line(1,0){10}}\put(00,30){\line(0,-1){10}}\put(10,30){\line(0,-1){20}}\put(20,30){\line(0,-1){30}}\put(30,30){\line(0,-1){30}}\put(40,30){\line(0,-1){20}}\put(50,30){\line(0,-1){10}}
\put(03,22){{\small $*$}}\put(13,22){{\small $*$}}\put(23,22){{\small $*$}}\put(33,22){{\small $*$}}\put(43,22){{\small $k^{\circ}$}}
                                       \put(13,12){{\small $*$}}\put(23,12){{\small $*$}}\put(33,12){{\small $1^{\circ}$}}
                                                                              \put(23,02){{\small $*$}}
\end{picture}
\ \ 
\begin{picture}(50,40)
\put(00,30){\line(1,0){50}}\put(00,20){\line(1,0){50}}\put(10,10){\line(1,0){30}}\put(20,00){\line(1,0){10}}\put(00,30){\line(0,-1){10}}\put(10,30){\line(0,-1){20}}\put(20,30){\line(0,-1){30}}\put(30,30){\line(0,-1){30}}\put(40,30){\line(0,-1){20}}\put(50,30){\line(0,-1){10}}
\put(03,22){{\small $*$}}\put(13,22){{\small $*$}}\put(23,22){{\small $*$}}\put(33,22){{\small $1^{\circ}$}}\put(43,22){{\small $2^{\circ}$}}
                                       \put(13,12){{\small $*$}}\put(23,12){{\small $*$}}\put(33,12){{\small $1^{\circ}$}}
                                                                              \put(23,02){{\small $*$}}
\end{picture}
\end{center}
where $k=1$ or $2$ so that
\begin{eqnarray*}
Q_{\lambda,f}(x|b) 
&=& Q_{531}(x|b) + Q_{431}(x|b) (z_1-b_4 + z_2-b_3) + Q_{521}(x|b)(z_1-b_2) \\
&&\ \ \ \ \ \ \   + Q_{421}(x|b)(z_1-b_4 + z_2-b_3)(z_1-b_2)+ Q_{321}(x|b)(z_1-b_3)(z_2-b_3)(z_1-b_2).
\end{eqnarray*}
\end{exm}
\subsection{Decomposition into $Q$-functions and skew Schur polynomials}
We can expand $Q_{\lambda}(x;z|b)$ in terms of Ivanov's factorial $Q$ functions in $x$ and $b$ where the coefficients are a variant of row-strict flagged skew Schur polynomials considered by Wachs \cite[p.288]{Wachs} in $z$ and $b$. 

A {\it partition} $\lambda$ is a weakly decreasing finite sequence of positive integers and we identify it with its Young diagram. The length of $\lambda$ is $r$ if it consists of $r$ positive integers. We denote the set of all partitions by $\calP$. Let $\lambda=(\lambda_1,\dots, \lambda_r)$ and $\mu=(\mu_1,\dots, \mu_r)$ be partitions of length at most $r$ such that $\mu\subset \lambda$, and $\lambda/\mu$ the corresponding skew diagram. A flagging $f=(f_1,\dots, f_r)$ of $\lambda/\mu$ is a sequence of non-positive integers. We call the pair $(\lambda/\mu,f)$ a {\it flagged skew diagram}. A {\it row-strict (flagged) tableau} $T$ of $(\lambda/\mu, f)$ is a filling of the skew diagram $\lambda/\mu$ which assigns  a positive integer to each box of $\lambda/\mu$ subject to the rules:
\begin{itemize}
\item numbers increase strictly from left to right along rows,
\item numbers increase weakly from top to bottom along columns, and
\item for each $i=1,\dots,r$, the numbers used in the $i$-th row are at most $f_i$.
\end{itemize}
Let $\SST^*(\lambda/\mu,f)$ be the set of all row-strict tableaux of the flagged skew diagram $(\lambda/\mu, f)$. 

\begin{defn}\label{df: row Schur} 
Let $\bfb=(b_i)_{i\in \ZZ}$. We define the row-strict flagged skew factorial Schur polynomial of a flagged shape $(\lambda/\mu,f)$ by
\[
\widetilde{s}_{\lambda/\mu,f}(z|\bfb) = \sum_{T\in \SST^*(\lambda/\mu,f)} (z|\bfb)^T
\]
where we assign the weight for each $T$ by 
\[
(z|\bfb)^T = \prod_{e\in T} \big(z_{|e|} + b_{|e|+r(e)-c(e)}\big).
\]
Note that here we {\it do not} assume $b_{-i}=-b_{i+1}$ for $i\geq 0$.  In the case when $\mu=\varnothing$, then we denote the corresponding polynomial by $\widetilde{s}_{\lambda,f}(z|\bfb)$. 
\end{defn}

\begin{rem}
In \S \ref{app1}, we give a Jacobi--Trudi type determinant formula for the row-strict flagged skew factorial Schur polynomials (Theorem \ref{thm:app1}). The proof uses the lattice path method.
\end{rem}
\begin{prop}\label{prop1}
Let $\lambda$ be a strict partition of length $r$. For a strict partition $\mu\subset \lambda$, let $\bar\mu=(\bar\mu_1,\dots,\bar\mu_r)$ be the sequence defined by $\bar\mu_i=\mu_i+i-1$ for $i=1,\dots, r$.  Assume that, if $r\geq 2$, then  $\lambda_{r-1} >f_{r-1}$ or $\lambda_r> f_r$.  Then we have
\begin{equation}\label{eqprop1}
Q_{\lambda,f}(x;z|b) = \sum_{\mu \in \SP \atop{\mu\subset\lambda \atop{\bar\mu\in \calP}}} Q_{\mu}(x|b)\cdot \widetilde{s}_{\bar\lambda/\bar\mu, f}(z|\bfb)^{\star},
\end{equation}
where $\star$ is the substitution $b_{-i}\mapsto -b_{i+1}$ for all $i\geq 0$. 
\end{prop}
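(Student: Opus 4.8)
The plan is to prove Proposition \ref{prop1} by directly analyzing the combinatorial structure of a marked shifted tableau $T\in\MST(\lambda,f)$ in terms of its two natural pieces, as indicated in Remark \ref{remMST}. First I would decompose each $T$ according to which boxes carry circled entries: the boxes carrying unmarked/primed entries form the shifted Young diagram of a strict partition $\mu\subset\lambda$, and the boxes carrying circled entries form the skew shape $\lambda/\mu$. The key observation is that, because circled numbers are strictly greater than all unmarked/primed ones and the tableau is weakly increasing along rows and columns, the circled part occupies exactly a ``north-east corner'' skew region whose complement is a genuine shifted diagram $\mu$. Moreover, by rules (1), (2), (3) the unmarked/primed part is an honest marked shifted tableau of $\mu$, and by rules (1) and (4) the circled part is a row-strict tableau of $\lambda/\mu$. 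Thus the map $T\mapsto(T_{\mathrm{u/p}},T_{\circ})$ sets up a bijection
\[
\MST(\lambda,f)\;\longleftrightarrow\;\coprod_{\mu}\MST(\mu)\times(\text{row-strict tableaux of }\lambda/\mu),
\]
and the weight $(xz|b)^T$ factors as the product of the corresponding two weights, since the three products defining $(xz|b)^T$ are indexed separately over unmarked, primed, and circled entries.

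The next step is to translate both sides of this factorization into the language of Definition \ref{df2-1} and Definition \ref{df: row Schur}. The unmarked/primed factor $\prod_{k\in T}(x_k+b_{c(k)-r(k)})\prod_{k'\in T}(x_k-b_{c(k')-r(k')})$ summed over $\MST(\mu)$ is by definition exactly Ivanov's $Q_\mu(x|b)$. For the circled factor, I would shift the shape: the diagram $\lambda/\mu$, living inside the shifted diagram of $\lambda$, is carried to the straight skew shape $\bar\lambda/\bar\mu$ where $\bar\mu_i=\mu_i+i-1$ as in the statement (this is precisely the ``unshifting'' that turns a shifted diagram into an ordinary Young diagram, so the ordinary containment and the condition $\bar\mu\in\calP$ are forced). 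Under this unshifting the column index $c$ is replaced by $c-(i-1)$ in row $i$, so the circled weight $\prod_{k^{\circ}\in T}(z_k+b_{k+r(k^{\circ})-c(k^{\circ})})$ matches the weight $(z|\bfb)^T=\prod_{e}(z_{|e|}+b_{|e|+r(e)-c(e)})$ of Definition \ref{df: row Schur} after accounting for the shift; summing over the row-strict tableaux of the flagged shape then produces $\widetilde{s}_{\bar\lambda/\bar\mu,f}(z|\bfb)$, where rule (5) becomes exactly the flagging condition on the row-strict tableau. Finally, since in $\MST(\lambda,f)$ we use the convention $b_{-i}=-b_{i+1}$ while Definition \ref{df: row Schur} does \emph{not}, I apply the substitution $\star$ to reconcile the two, which is why the factor on the right is $\widetilde{s}_{\bar\lambda/\bar\mu,f}(z|\bfb)^{\star}$.

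The main obstacle I anticipate is verifying the index bookkeeping in the circled weight precisely, and ensuring the bijection is clean at the boundary of $\mu$ inside $\lambda$. Concretely, I must check that every strict partition $\mu\subset\lambda$ arising this way indeed satisfies $\bar\mu\in\calP$ (so that $\bar\lambda/\bar\mu$ is a legitimate skew shape) and conversely that any row-strict filling of $\bar\lambda/\bar\mu$ lifts back to a valid circled configuration respecting rule (1) at the interface with the unmarked/primed region. The technical hypothesis ``if $r\geq2$, then $\lambda_{r-1}>f_{r-1}$ or $\lambda_r>f_r$'' is what I expect to be needed to guarantee this interface behaves well and that no degenerate situation (an entirely circled bottom row forcing an illegal shape) arises; I would isolate this as a short case check. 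The residual step of matching the shift $b_{k+r-c}\mapsto b_{|e|+r(e)-(c(e)+r(e)-1)}$ and confirming it lands on the $\bfb$-indices used in $\widetilde{s}_{\bar\lambda/\bar\mu,f}$ is purely a routine reindexing, so once the bijection and the weight factorization are established the formula \eqref{eqprop1} follows by collecting terms over $\mu$.
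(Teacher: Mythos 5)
Your overall strategy is exactly the paper's proof: split $T\in\MST(\lambda,f)$ into its unmarked/primed part and its circled part, observe that this gives a weight-preserving bijection $\MST(\lambda,f)\cong\bigsqcup_{\mu}\MST(\mu)\times\SST^*(\bar\lambda/\bar\mu,f)$ (up to the substitution $\star$), and invoke the hypothesis on $(\lambda,f)$ to ensure $\bar\mu\in\calP$. However, the one computation you defer as ``purely a routine reindexing'' is stated incorrectly, and carried out as written it would break the proof. The point of setting $\bar\mu_i=\mu_i+i-1$ (and $\bar\lambda_i=\lambda_i+i-1$) is that the skew Young diagram $\bar\lambda/\bar\mu$ occupies \emph{exactly the same boxes, with the same row and column coordinates}, as the shifted skew diagram $\lambda/\mu$: in both, row $i$ consists of the boxes in columns $\mu_i+i,\dots,\lambda_i+i-1$. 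So there is no ``unshifting'' of column indices at all, and the circled weight $b_{k+r(k^{\circ})-c(k^{\circ})}$ agrees verbatim with the index $b_{|e|+r(e)-c(e)}$ in Definition \ref{df: row Schur}. If you instead replace $c$ by $c-(i-1)$ in row $i$, as you propose, the index coming from Definition \ref{df: row Schur} becomes $|e|+r(e)-(c-r(e)+1)=|e|+2r(e)-c-1$, which is off by $r(e)-1$ from the correct index $|e|+r(e)-c$; the weights would then fail to match and the claimed identity would be false. The ``shift'' you need to account for is the identity map.

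A second, smaller imprecision: the degenerate situation excluded by the hypothesis is not a single entirely circled bottom row. If only $\mu_r=0$, then $\bar\mu_r=r-1\leq \mu_{r-1}+r-2=\bar\mu_{r-1}$, so $\bar\mu$ is still a partition and nothing goes wrong. The problem is \emph{two} entirely circled bottom rows, $\mu_{r-1}=\mu_r=0$, which gives $\bar\mu_{r-1}=r-2<r-1=\bar\mu_r$, so $\bar\mu\notin\calP$ and $\bar\lambda/\bar\mu$ is not a legitimate skew shape. Since an entirely circled row $i$ has $\lambda_i$ strictly increasing circled entries bounded by $f_i^{\circ}$, it forces $\lambda_i\leq f_i$; hence the hypothesis ``$\lambda_{r-1}>f_{r-1}$ or $\lambda_r>f_r$'' rules this case out. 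Both issues are locally repairable, and once fixed your argument coincides with the paper's.
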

\begin{proof}
The circled numbers form a row-strict flagged skew tableau of a skew {\it shifted} diagram $\lambda/\mu$ since the alphabets must be weakly increasing in each row and column. The assumption assures that this skew shifted diagram is indeed a skew (unshifted) diagram $\bar\lambda/\bar\mu$, {\it i.e.}, $\bar\mu$ is a partition contained in the partition $\bar \lambda$. Thus we see that there is an obvious bijection
\[
\MST(\lambda,f) \cong \bigsqcup_{\mu\in \SP \atop{\mu \subset \lambda \atop{\bar\mu \in\calP}} }\MST(\mu) \times \SST^*(\bar\lambda/\bar\mu,f), \ \ \ \ \ \ T \mapsto (T',T^{\circ})
\]
where $T'$ is the part of $T$ with unmarked and primed numbers and $T^{\circ}$ is the part of $T$ with circled numbers. This bijection apparently preserves the weights after the substitution $\star$, and hence we obtain the desired formula.
\end{proof}
\begin{rem}
Proposition \ref{prop1} implies that $Q_{\lambda,f}(x;z|b)$ is an element of $\Gamma[z,b]$ defined in \S\ref{secprelim}. Indeed, it follows from the facts that the summation in (\ref{eqprop1}) is finite, and that both $Q_{\mu}(x|b)$ and $\widetilde{s}_{\bar\lambda/\bar\mu, f}(z|\bfb)^{\star}$ are elements of $\Gamma[z,b]$.
\end{rem}
\subsection{One row case}
In this section, we describe $Q_{\lambda,f}(x;z|b)$ in the case $\lambda$ has only one row. 
\begin{lem}\label{lem1}
Let $r,t$ and $f$ be nonnegative integers such that $r-t\geq 0$. We have
\[
\widetilde{s}_{(r)/(t), (f)}(z|\bfb) = e_{r-t}^{[f|r-t-f-1]}(z|\tau^{-t}b).
\]
In particular, if $r-t>f$, both sides of the equation are zero.
\end{lem}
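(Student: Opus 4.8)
The plan is to unwind the one-row case into a sum over strictly increasing sequences, identify the right-hand side as a convolution of an elementary and a complete-homogeneous symmetric polynomial, and then match the two sides through a common recursion in the number of available values. Write $n:=r-t$. A row-strict tableau of $((r)/(t),(f))$ is simply a strictly increasing sequence $1\le \alpha_1<\dots<\alpha_n\le f$ placed in columns $t+1,\dots,r$ (rule three forces all entries to be $\le f$). Recording the $j$-th entry in column $t+j$, its weight factor is $z_{\alpha_j}+b_{\alpha_j+1-(t+j)}=z_{\alpha_j}+(\tau^{-t}b)_{\alpha_j-j+1}$, so
\[
\widetilde{s}_{(r)/(t),(f)}(z|\bfb)=\sum_{1\le \alpha_1<\dots<\alpha_n\le f}\ \prod_{j=1}^n\Big(z_{\alpha_j}+(\tau^{-t}b)_{\alpha_j-j+1}\Big).
\]
For the right-hand side, recall from Section \ref{secprelim} that $e^{[f]}_a(z)$ is the elementary symmetric polynomial in $z_1,\dots,z_f$, and, since $n-f-1<0$ when $n\le f$, that $e^{[n-f-1]}_c(\tau^{-t}b)$ is the complete homogeneous symmetric polynomial in $(\tau^{-t}b)_1,\dots,(\tau^{-t}b)_{f-n+1}$. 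Extracting the coefficient of $u^n$ from $e^{[f]}_u(z)e^{[n-f-1]}_u(\tau^{-t}b)$ therefore gives, for $n\le f$,
\[
e_{r-t}^{[f|r-t-f-1]}(z|\tau^{-t}b)=\sum_{a+c=n} e^{[f]}_a(z)\,e^{[n-f-1]}_c(\tau^{-t}b).
\]
The case $n>f$ is immediate: the left side is an empty sum, while on the right $e^{[f]}_u(z)$ has $u$-degree $f$ and $e^{[n-f-1]}_u(\tau^{-t}b)$ (now $n-f-1\ge 0$) has $u$-degree $n-f-1$, so the product has $u$-degree $n-1<n$ and its $u^n$-coefficient vanishes.

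For $n\le f$ I would prove the two expressions agree by a shared recursion. Introduce, for $0\le n\le N$,
\[
F(N,n)=\sum_{1\le \alpha_1<\dots<\alpha_n\le N}\ \prod_{j=1}^n\Big(z_{\alpha_j}+(\tau^{-t}b)_{\alpha_j-j+1}\Big),\qquad \Phi(N,n)=\sum_{a+c=n}e^{[N]}_a(z)\,e^{[-(N-n+1)]}_c(\tau^{-t}b),
\]
so that at $N=f$ the claim becomes $F(f,n)=\Phi(f,n)$. Splitting $F(N,n)$ according to whether the largest allowed value $N$ occurs as $\alpha_n$ yields at once
\[
F(N,n)=F(N-1,n)+\big(z_N+(\tau^{-t}b)_{N-n+1}\big)\,F(N-1,n-1),
\]
together with $F(N,0)=1$ and $F(N,n)=0$ for $n>N$.

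Finally I would check that $\Phi$ satisfies the same recursion and boundary values and conclude by induction on $N$. Substituting the Pascal recursions $e^{[N]}_a(z)=e^{[N-1]}_a(z)+z_N\,e^{[N-1]}_{a-1}(z)$ and $e^{[-m]}_c(\tau^{-t}b)=e^{[-(m-1)]}_c(\tau^{-t}b)+(\tau^{-t}b)_m\,e^{[-m]}_{c-1}(\tau^{-t}b)$ with $m=N-n+1$ into $\Phi(N,n)$ and collecting terms, the four resulting pieces reorganize into $\Phi(N-1,n)$ plus the $z_N$- and $(\tau^{-t}b)_{N-n+1}$-multiples of $\Phi(N-1,n-1)$, matching the recursion for $F$ exactly. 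The delicate point, and the main obstacle, is the bookkeeping of the number of $b$-variables: it is $N-n+1$ at level $(N,n)$, drops to $N-n$ at level $(N-1,n)$, and returns to $N-n+1$ at level $(N-1,n-1)$, and it is precisely this shift that lets the cross terms telescope through the complete-homogeneous recursion. With the base cases $\Phi(N,0)=1$ and $\Phi(N,N+1)=e^{[N]}_{N+1}(z)=0$ in place, induction yields $F(f,n)=\Phi(f,n)$, which together with the $n>f$ discussion proves the statement. (A direct bijective expansion of $\prod_j(z_{\alpha_j}+(\tau^{-t}b)_{\alpha_j-j+1})$, sorting positions into $z$- and $b$-factors, would also work, but verifying that the resulting map onto pairs of an elementary and a complete-homogeneous monomial is a bijection with the correct index ranges is more error-prone than the recursion.)
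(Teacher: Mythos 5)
Your proof is correct. The opening moves coincide with the paper's: both unwind $\widetilde{s}_{(r)/(t),(f)}(z|\bfb)$ into a sum over strictly increasing sequences $1\le \alpha_1<\cdots<\alpha_n\le f$ (with the skew part absorbed into the shift $\tau^{-t}$), and both dispose of the case $n=r-t>f$ by the same degree count in $u$. Where you genuinely diverge is in how the remaining identity is clinched. The paper re-indexes the strictly increasing sum into a weakly increasing one via $j_k=i_k-k+1$, recognizes the result as ``the usual one-row factorial Schur polynomial,'' and quotes its known closed form $h_r^{[f+1-r|-f]}(b|z)=e_r^{[f|r-f-1]}(z|b)$; the key step is thus an appeal to classical factorial Schur function theory. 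You instead prove the identity from scratch: both sides satisfy the Pascal-type recursion $G(N,n)=G(N-1,n)+\bigl(z_N+(\tau^{-t}b)_{N-n+1}\bigr)G(N-1,n-1)$ with matching boundary values, and induction on $N$ finishes. Your verification of the recursion for $\Phi$ is sound --- the cross terms telescope precisely because the number of $b$-variables, $N-n+1$, is preserved along $(N,n)\to(N-1,n-1)$ and drops by one along $(N,n)\to(N-1,n)$, and the out-of-range boundary value $\Phi(N-1,N)=e^{[N-1]}_{N}(z)=0$ that the inductive step needs is exactly the one you check. The trade-off: the paper's argument is shorter but rests on an external identity it does not prove or precisely cite, plus the re-indexing trick; yours is longer but elementary and self-contained, which is a reasonable price for eliminating the black box.
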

\begin{proof}
If $r-t>f$, then the left hand side is zero, since the tableaux are row-strict. The right hand side is also zero, since it is the $(r-t)$-th elementary symmetric polynomial in $r-t-1$ variables. Suppose $r-t\leq f$. If $t=0$, then we have
\begin{eqnarray*}
\widetilde{s}_{(r), (f)}(z|\bfb)
&=&\sum_{1\leq i_1<\cdots <i_r\leq f} (z_{i_1}+b_{i_1})(z_{i_2}+b_{i_2-1})\cdots (z_{i_r}+b_{i_r+1-r})\\
&=&\sum_{1\leq j_1\leq\cdots \leq j_r\leq f+1-r} (b_{j_1}+z_{j_1})(b_{j_2}+z_{j_2+1})\cdots (b_{j_r}+z_{j_r+r-1}).
\end{eqnarray*}
Since this is the usual one-row factorial Schur polynomial, we have 
\[
\widetilde{s}_{(r), (f)}(z|\bfb) = h_r^{[f+1-r|-f]}(b|z) = e_r^{[f|r-f-1]}(z|b).
\]
In the general case $t\geq 0$, let $m:=r-t$, then we have
\begin{eqnarray*}
\widetilde{s}_{(r)/(t), (f)}(z|\bfb)
&=&\sum_{1\leq i_1<\cdots <i_{m}\leq f} (z_{i_1}+b_{i_1-t})(z_{i_2}+b_{i_2-1-t})\cdots (z_{i_m}+b_{i_m+1-m-t})\\
&=&\widetilde{s}_{(m), (f)}(z|\tau^{-t}\bfb)= e_m^{[f|m-f-1]}(z|\tau^{-t}b).
\end{eqnarray*}
This completes the proof of the formula.
\end{proof}
\begin{rem}
Suppose that $0\leq r-t \leq f$. The $b$-variables appearing in $\widetilde{s}_{(r)/(t), (f)}(z|\bfb)$ are
\[
b_{1-t}, b_{2-t},\dots,b_{f-r} ,b_{f-r+1}.
\]
If $r>f$, then $t>0$ and the indices of those $b_i$'s are all nonpositive. 
\end{rem}
Proposition \ref{prop1} and Lemma \ref{lem1} imply the following.
\begin{prop}\label{prop2-1}
For nonnegative integers $r$ and $f$, we have  
\[
Q_{(r), (f)}(x;z|b) = \sum_{k=0}^{f} q_{r-k}^{[r-k-1]}(x|b) \cdot   e_k^{[f|k-f-1]}(z|\tau^{k-r}b)^{\star},
\]
where $\star$ is the substitution $b_{-i}=-b_{i+1}$ for all $i\geq 0$. 
\end{prop}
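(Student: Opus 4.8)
The plan is to combine the decomposition formula of Proposition~\ref{prop1} with the explicit one-row evaluation of Lemma~\ref{lem1}, so that the sum over sub-partitions collapses into the single-index sum over $k$ displayed in the statement. First I would apply Proposition~\ref{prop1} to the one-row flagged strict partition $(\lambda,f)=((r),(f))$. Here $\lambda$ has length $1$, so the hypothesis ``$\lambda_{r-1}>f_{r-1}$ or $\lambda_r>f_r$'' is vacuous (there is no row $r-1$), and the proposition applies unconditionally. The strict sub-partitions $\mu\subset(r)$ are exactly $\mu=(m)$ for $0\le m\le r$ together with $\mu=\varnothing$; writing $\mu=(m)$ we have $\bar\mu=(m)$ (since $\bar\mu_1=\mu_1+1-1=\mu_1$) and $\bar\lambda=(r)$, both automatically partitions, so the condition $\bar\mu\in\calP$ is always satisfied. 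Thus Proposition~\ref{prop1} gives
\[
Q_{(r),(f)}(x;z|b)=\sum_{m=0}^{r} Q_{(m)}(x|b)\cdot \widetilde{s}_{(r)/(m),(f)}(z|\bfb)^{\star}.
\]

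Next I would invoke Lemma~\ref{lem1} with $t=m$ to evaluate each skew factor as $\widetilde{s}_{(r)/(m),(f)}(z|\bfb)=e_{r-m}^{[f|\,r-m-f-1]}(z|\tau^{-m}b)$, which vanishes whenever $r-m>f$. Therefore the only surviving terms are those with $r-m\le f$. Reindexing by $k:=r-m$ (so $m=r-k$ and $\tau^{-m}=\tau^{k-r}$) turns the sum over $m$ into a sum over $k$ from $0$ to $f$, and the $x|b$-factor becomes $Q_{(r-k)}(x|b)$. The final ingredient is to identify the one-row factorial $Q$-function $Q_{(r-k)}(x|b)$ with the basic function $q_{r-k}^{[r-k-1]}(x|b)$ from Section~\ref{secprelim}; this is the standard generating-function identity for Ivanov's one-row factorial $Q$-functions, which I would cite or verify directly from the generating function $q_u^{[\ell]}(x|b)=q_u(x)e_u^{[\ell]}(b)$. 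Assembling these substitutions yields exactly
\[
Q_{(r),(f)}(x;z|b)=\sum_{k=0}^{f} q_{r-k}^{[r-k-1]}(x|b)\cdot e_k^{[f|\,k-f-1]}(z|\tau^{k-r}b)^{\star}.
\]

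The main obstacle I anticipate is purely bookkeeping rather than conceptual: tracking the index shifts correctly through the reindexing $m\mapsto r-k$ and through the action of $\tau$, and making sure the superscript bracket $[f\,|\,r-m-f-1]$ transforms into $[f\,|\,k-f-1]$ under $k=r-m$ (which it does, since $r-m-f-1=k-f-1$). I would also take care that the substitution $\star$ (replacing $b_{-i}$ by $-b_{i+1}$) is applied to the $z|\bfb$-factor exactly as in Proposition~\ref{prop1}, and is \emph{not} applied to the $Q_{(m)}(x|b)$ factor, which already lives over the ordinary $b$-variables. No genuinely hard analytic or combinatorial step arises here; the proposition is essentially a specialization of the general decomposition to a single row, with Lemma~\ref{lem1} doing the computational work.
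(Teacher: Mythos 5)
Your proposal follows essentially the same route as the paper: specialize Proposition~\ref{prop1} to the one-row shape, evaluate the skew factors by Lemma~\ref{lem1}, identify $Q_{(m)}(x|b)=q_m^{[m-1]}(x|b)$, and adjust the summation range. The only point glossed over is the case $r<f$, where the reindexed sum a priori runs only up to $k=r$ and one must add the terms $r<k\leq f$, which vanish because $q_{r-k}^{[r-k-1]}(x|b)=0$ for $k>r$; the paper states this two-case check explicitly, but it is a triviality.
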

\begin{proof}
Proposition \ref{prop1} implies that
\[
Q_{(r), (f)}(x;z|b) = \sum_{k=0}^r Q_{(r-k)}(x|b) \cdot \widetilde{s}_{(r)/(r-k), (f)}(z|\bfb)^{\star}.
\]
It is known that $Q_{(m)}(x|b)=q_m^{[m-1]}(x|b)$ (see \cite[\S11]{IkedaMihalceaNaruse}) and thus together with Lemma \ref{lem1} we have
\[
Q_{(r), (f)}(x;z|b) = \sum_{k=0}^{r} q_{r-k}^{[r-k-1]}(x|b) \cdot   e_k^{[f|k-f-1]}(z|\tau^{k-r}b)^{\star}.
\]
The upper bound for $k$ in the summation can be $f$ instead of $r$: if $r<f$, the claim holds since $q_{r-k}^{[r-k-1]}(x|b)=0$ for $r<k\leq f$; if $f<r$, the claim holds since $e_k^{[f|k-f-1]}(z|\tau^{k-r}b) = 0$ for $f<k\leq r$. Thus we have proved the desired formula.
\end{proof}
\subsection{Other formulas}
In the rest of the section, we prove Proposition \ref{prop2} which will be used in the proof of Theorem \ref{mainthm}. Let $\star$ denote the substitution $b_{-i}\mapsto -b_{i+1}$ for all $i\geq 0$ as before. We start with the following lemma.
\begin{lem}\label{lem2}
Let $s, t, m\in \ZZ$ and $n \in \ZZ_{\geq 0}$. For each $s\in \ZZ$, we have
\[
\sum_{\ell\leq s} q_{\ell}^{[m]}\cdot e_{t-\ell}^{[-n-1]}(\tau^{-m}b)^{\star}
=q_{s}^{[m-1]}\cdot e_{t-s}^{[-n-1]}(\tau^{-m}b)^{\star}
+\sum_{\ell \leq s-1} q_{\ell}^{[m-1]}\cdot e_{t-\ell}^{[-n]}(\tau^{1-m}b)^{\star}.
\]
\end{lem}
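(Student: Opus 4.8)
The plan is to prove the identity by induction on $s$, using that both sides are in fact finite sums and telescope. First I would note that $q_\ell^{[m]}=0$ whenever $\ell<0$, since $q_u(x)e_u^{[m]}(b)$ involves no negative powers of $u$. Hence for $s<0$ every term on each side vanishes and the identity holds trivially; this is the base case and also shows that all the $\ell$-sums effectively start at $\ell=0$.

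For the inductive step I would show that the increment of the left-hand side as $s$ advances by one equals that of the right-hand side. On the left this increment is the single term $q_s^{[m]}\,e_{t-s}^{[-n-1]}(\tau^{-m}b)^\star$; on the right the two $\ell$-sums differ by exactly one summand, so the increment collapses to
\[
q_s^{[m-1]}e_{t-s}^{[-n-1]}(\tau^{-m}b)^\star-q_{s-1}^{[m-1]}e_{t-s+1}^{[-n-1]}(\tau^{-m}b)^\star+q_{s-1}^{[m-1]}e_{t-s+1}^{[-n]}(\tau^{1-m}b)^\star.
\]
The whole lemma then reduces to matching these two increments, a single pointwise identity, and the inductive bookkeeping is routine.

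To establish that pointwise identity I would invoke two elementary recursions. The first comes from $e_u^{[m]}(b)=(1+b_mu)e_u^{[m-1]}(b)$ for $m\geq 1$ and $e_u^{[m]}(b)=(1-b_{1-m}u)e_u^{[m-1]}(b)$ for $m\leq 0$, which yields a Pascal-type relation $q_s^{[m]}=q_s^{[m-1]}+\beta_m\,q_{s-1}^{[m-1]}$ with $\beta_m=b_m$ (resp. $\beta_m=-b_{1-m}$). Substituting this, the term $q_s^{[m-1]}e_{t-s}^{[-n-1]}(\cdots)^\star$ cancels, leaving the task of showing $\beta_m\,e_{t-s}^{[-n-1]}(\tau^{-m}b)^\star=e_{t-s+1}^{[-n]}(\tau^{1-m}b)^\star-e_{t-s+1}^{[-n-1]}(\tau^{-m}b)^\star$. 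The second recursion is the factorization $e_u^{[-n-1]}(\tau^{-m}b)=(1-b_{1-m}u)^{-1}e_u^{[-n]}(\tau^{1-m}b)$, reflecting that $\tau^{1-m}b$ is $\tau^{-m}b$ with its first entry $b_{1-m}$ deleted; extracting the coefficient of $u^{t-s+1}$ gives, before $\star$, the identity $e_{t-s+1}^{[-n]}(\tau^{1-m}b)-e_{t-s+1}^{[-n-1]}(\tau^{-m}b)=-b_{1-m}\,e_{t-s}^{[-n-1]}(\tau^{-m}b)$.

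The main obstacle, and the only genuinely delicate point, is reconciling these two relations across the substitution $\star$. The coefficient $\beta_m$ enters through the $q$-factor, which keeps its original $b$-variables and is untouched by $\star$, whereas applying $\star$ to the $e$-identity turns the factor $-b_{1-m}$ into $(-b_{1-m})^\star$. So I must verify $\beta_m=(-b_{1-m})^\star$ for every $m\in\ZZ$, via a short case analysis on the sign of $m$: for $m\geq 1$ the index $1-m\leq 0$, so $(b_{1-m})^\star=-b_m$ and hence $(-b_{1-m})^\star=b_m=\beta_m$; for $m\leq 0$ the index $1-m\geq 1$ is positive and fixed by $\star$, so $(-b_{1-m})^\star=-b_{1-m}=\beta_m$. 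Once this compatibility is confirmed the pointwise identity holds for all $m$, and the induction closes.
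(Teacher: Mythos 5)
Your proposal is correct and follows essentially the same route as the paper: the paper's proof rests on the same two one-step recursions, $q_\ell^{[m]}=q_\ell^{[m-1]}+b_m^{\star}q_{\ell-1}^{[m-1]}$ and $e_\ell^{[-n]}(\tau^{1-m}b)^{\star}=e_\ell^{[-n-1]}(\tau^{-m}b)^{\star}+b_m^{\star}\,e_{\ell-1}^{[-n-1]}(\tau^{-m}b)^{\star}$, which are exactly your Pascal relation and your $\star$-compatibility check $\beta_m=(-b_{1-m})^{\star}=b_m^{\star}$ packaged into the notation $b_m^{\star}$. The only difference is bookkeeping: the paper telescopes the whole sum in one pass (apply the first recursion, split off the $\ell=s$ term, reindex, recombine via the second recursion) rather than phrasing the same computation as an induction on $s$.
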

\begin{proof}
By definition, we have $q_u^{[m]} = q_u^{[m-1]}\cdot (1+ b_m^{\star}u)$ so that 
\begin{equation}\label{eeqq1}
q_{\ell}^{[m]} = q_{\ell}^{[m-1]} + q_{\ell-1}^{[m]} \cdot b_m^{\star} \ \ \ (\ell\in \ZZ).
\end{equation}
Similarly, we have $e_u^{[-n]}(\tau^{1-m} b)^{\star}=e_u^{[-n-1]}(\tau^{-m}b)^{\star} \cdot (1+b_{m}^{\star} u)$ 
so that
\begin{equation}\label{eeqq2}
e_{\ell}^{[-n]}(\tau^{1-m} b)^{\star} = e_{\ell}^{[-n-1]}(\tau^{-m}b)^{\star} + e_{\ell-1}^{[-n-1]}(\tau^{-m}b)^{\star}\cdot b_{m}^{\star}\ \ \ \ \ (\ell\in \ZZ).
\end{equation}
Using these identities, we can compute:
\begin{eqnarray*}
&&\sum_{\ell\leq s} q_{\ell}^{[m]}\cdot e_{t-\ell}^{[-n-1]}(\tau^{-m}b)^{\star}\\
&\stackrel{(\ref{eeqq1})}{=}&\sum_{\ell\leq s} q_{\ell}^{[m-1]}\cdot e_{t-\ell}^{[-n-1]}(\tau^{-m}b)^{\star}+\sum_{\ell\leq s} q_{\ell-1}^{[m-1]} \cdot b_m^{\star}\cdot e_{t-\ell}^{[-n-1]}(\tau^{-m}b)^{\star}\\
&=&q_{s}^{[m-1]}\cdot e_{t-s}^{[-n-1]}(\tau^{-m}b)^{\star}+\sum_{\ell\leq s-1} q_{\ell}^{[m-1]}\cdot e_{t-\ell}^{[-n-1]}(\tau^{-m}b)^{\star}+\sum_{\ell\leq s-1} q_{\ell}^{[m-1]} \cdot b_m^{\star}\cdot e_{t-\ell-1}^{[-n-1]}(\tau^{-m}b)^{\star}\\
&\stackrel{(\ref{eeqq2})}{=}&q_{s}^{[m-1]}\cdot e_{t-s}^{[-n-1]}(\tau^{-m}b)^{\star}
+\sum_{\ell\leq s-1} q_{\ell}^{[m-1]}\cdot e_{t-\ell}^{[-n]}(\tau^{1-m}b)^{\star}.
\end{eqnarray*}
Thus we obtain the desired formula.
\end{proof}
\begin{prop}\label{prop2}
For integers $r,f\geq 0$ and an integer $a$, we have
\[
q_{r+a}^{[f|r-f-1]}(x;z|b)=\sum_{k=0}^f q_{r-k+a}^{[r-k-1]}(x|b)\cdot e_{k}^{[f|k-1-f]}(z|\tau^{k-r}b)^{\star}.
\]
In particular, we have $q_r^{[f|r-f-1]}(x;z|b)= Q_{(r), (f)}(x;z|b)$ in the view of Proposition \ref{prop2-1}.

\end{prop}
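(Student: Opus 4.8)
The plan is to strip the $z$-variables off, turning the claim into a purely $x,b$ identity, and then to prove that identity by iterating Lemma~\ref{lem2} exactly $f+1$ times and evaluating the single convolution that survives. First I would separate $z$. Extracting the coefficient of $u^{r+a}$ from $q_u(x)e^{[f]}_u(z)e^{[r-f-1]}_u(b)$ and using $e^{[f]}_u(z)=\sum_{j\ge0}e_j^{[f]}(z)u^j$ gives $q_{r+a}^{[f|r-f-1]}=\sum_{j=0}^f e_j^{[f]}(z)\,q_{r+a-j}^{[r-f-1]}(x|b)$, while $e_k^{[f|k-1-f]}(z|\tau^{k-r}b)^\star=\sum_j e_j^{[f]}(z)\,e_{k-j}^{[k-1-f]}(\tau^{k-r}b)^\star$ rewrites the proposed right-hand side in the same form. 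Since the elementary symmetric polynomials $e_0^{[f]}(z),\dots,e_f^{[f]}(z)$ in the independent variables $z_1,\dots,z_f$ are linearly independent over $\ZZ[x,b]$, the proposition is equivalent to the equality of the coefficients of each $e_j^{[f]}(z)$, and the substitution $(r,f)\mapsto(r-j,f-j)$ collapses all of these to the single $z$-free identity
\begin{equation*}
q_{r+a}^{[r-f-1]}(x|b)=\sum_{k=0}^f q_{r-k+a}^{[r-k-1]}(x|b)\,e_k^{[k-1-f]}(\tau^{k-r}b)^\star.\tag{$\star\star$}
\end{equation*}

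Next I would prove $(\star\star)$ by iterating Lemma~\ref{lem2}. Put $S=\sum_{\ell\le r+a}q_\ell^{[r]}\,e_{r+a-\ell}^{[-f-1]}(\tau^{-r}b)^\star$, which is exactly the left-hand side of Lemma~\ref{lem2} with $m=r$, $s=t=r+a$, $n=f$. Applying the lemma $f+1$ times, with $n$ running through $f,f-1,\dots,0$, each step splits off one boundary term and lowers all the running parameters by one. The boundary terms accumulated are precisely $q_{r+a-p}^{[r-1-p]}(x|b)\,e_p^{[p-1-f]}(\tau^{p-r}b)^\star$ for $p=0,\dots,f$, i.e.\ the right-hand side of $(\star\star)$; the terminal remainder $\sum_{\ell\le r+a-f-1}q_\ell^{[r-f-1]}e_{r+a-\ell}^{[0]}(\tau^{f+1-r}b)^\star$ vanishes, because $e^{[0]}_u=1$ forces $e_{r+a-\ell}^{[0]}=\delta_{\ell,r+a}$ while $\ell=r+a$ violates $\ell\le r+a-f-1$. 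Hence the right-hand side of $(\star\star)$ equals $S$.

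It then remains to evaluate $S$. Because $q_\ell^{[r]}=0$ for $\ell<0$ and $e_{r+a-\ell}^{[-f-1]}=0$ for $\ell>r+a$, the truncation in $S$ is harmless, so $S$ is the coefficient of $u^{r+a}$ in $q_u(x)\,e^{[r]}_u(b)\,e^{[-f-1]}_u(\tau^{-r}b)^\star$. It therefore suffices to check the generating-function identity $e^{[r]}_u(b)\,e^{[-f-1]}_u(\tau^{-r}b)^\star=e^{[r-f-1]}_u(b)$, after which $S=q_{r+a}^{[r-f-1]}(x|b)$, the left-hand side of $(\star\star)$. Writing $e^{[r]}_u(b)=\prod_{i=1}^r(1+b_iu)$ and $e^{[-f-1]}_u(\tau^{-r}b)^\star=\prod_{i=1}^{f+1}(1-b_{i-r}^\star u)^{-1}$ and using the antisymmetry $b_m^\star=-b_{1-m}^\star$ coming from the substitution $\star$ to turn each $1-b_{i-r}^\star u$ into $1+b_{r-i+1}^\star u$, the $f+1$ denominator factors cancel against (when $r\ge f+1$) or complete (when $r<f+1$) the factors of $e^{[r]}_u(b)$, leaving exactly $e^{[r-f-1]}_u(b)$. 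This establishes $(\star\star)$ and hence the proposition; specializing to $a=0$ reproduces the right-hand side of Proposition~\ref{prop2-1}, which gives the asserted identity $q_r^{[f|r-f-1]}=Q_{(r),(f)}$.

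The hardest part will be the index bookkeeping in the two middle steps: choosing the parameters in the iteration of Lemma~\ref{lem2} so that the boundary terms land on the nose on the sum in $(\star\star)$, and tracking the sign flips $b_m^\star=-b_{1-m}^\star$ through the telescoping of the final product identity, which is most delicate precisely when $r\le f$ and the $b$-superscripts cross zero. Everything else is a routine convolution computation.
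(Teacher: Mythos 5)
Your core argument coincides with the paper's own proof: the paper also establishes the factorization $e_u^{[r-1-f]}(b)=e_u^{[r]}(b)\,e_u^{[-1-f]}(\tau^{-r}b)^{\star}$ by the same two-case cancellation, expands $q_{r+a}^{[f|r-f-1]}$ as the convolution $\sum_{\ell\le r+a}q_\ell^{[r]}\,e_{r+a-\ell}^{[f|-1-f]}(z|\tau^{-r}b)^{\star}$, iterates Lemma \ref{lem2} with exactly your parameter choices $s=r+a-k$, $m=r-k$, $t=r+a$, $n=f-k$, and kills the terminal remainder by the same degree count. The only structural difference is that you strip the $z$-variables first, by linear independence of $e_0^{[f]}(z),\dots,e_f^{[f]}(z)$, and then run the telescoping on the $z$-free identity $(\star\star)$, whereas the paper lets the factor $e_u^{[f]}(z)$ ride along through the identical computation. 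That preprocessing is harmless in principle, but it is exactly where your write-up has a gap.

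The gap: your reduction asserts that the coefficient of $e_j^{[f]}(z)$ yields $(\star\star)$ under the substitution $(r,f)\mapsto(r-j,f-j)$. When $f>r$ --- a case the proposition allows, and which is genuinely used in the paper (in the proof of Theorem \ref{mainthm} the last row can have $\lambda_r-f_r\le 0$, so the proposition is invoked with flag exceeding the row length) --- the indices $j$ with $r<j\le f$ produce instances of $(\star\star)$ whose first parameter $r-j$ is negative, and these instances are not vacuous: for $r=0$, $f=2$, $j=1$, $a=3$ the left-hand side is $q_2^{[-3]}(x|b)\neq 0$. Your proof of $(\star\star)$, however, is written only for nonnegative first parameter: you write $e_u^{[r]}(b)=\prod_{i=1}^{r}(1+b_iu)$ and analyze the two cases $r\ge f+1$ and $r<f+1$ in terms of cancelling or completing numerator factors, neither of which makes sense when $e_u^{[r]}(b)$ is itself a product of denominators. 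So, as written, your argument proves the proposition only for $f\le r$. The fix is short: for $r<0$ one has $e_u^{[r]}(b)=\prod_{i=1}^{|r|}(1-b_iu)^{-1}$, every shifted index $i-r$ is positive so the $\star$-substitution acts trivially, and the product identity $e_u^{[r]}(b)\,e_u^{[-f-1]}(\tau^{-r}b)^{\star}=e_u^{[r-f-1]}(b)$ holds by concatenation of denominators; since Lemma \ref{lem2} already allows $m\in\ZZ$, the telescoping and the remainder argument then go through verbatim. Alternatively, you can avoid the issue entirely by keeping $e_u^{[f]}(z)$ inside the convolution, as the paper does.
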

\begin{proof}
First we observe that $e_u^{[r-1-f]}(b)=e_u^{[r]}(b)\cdot e_u^{[-1-f]}(\tau^{-r}b)^{\star}$. Indeed, if $r>f$, then
\begin{eqnarray*}
e_u^{[r]}(b)\cdot e_u^{[-1-f]}(\tau^{-r}b)^{\star} 
&=&e_u^{[r]}(b_1,\dots, b_r)\cdot e_u^{[-1-f]}(b_{1-r},b_{2-r},\dots, \dots, b_{f+1-r})^{\star}\\
&=&e_u^{[r]}(b_1,\dots, b_r)\cdot e_u^{[-1-f]}(-b_r,-b_{r-1},\dots, -b_{r-f})\\
&=&e_u^{[r]}(b_1,\dots, b_r)\cdot e_u^{[f+1]}(b_{r-f}, \cdots, b_{r-1}, b_r)^{-1}\\
&=&e_u^{[r-1-f]}(b_1,\dots, b_{r-f-1}) = e_u^{[r-1-f]}(b).
\end{eqnarray*}
If $r\leq f$, then
\begin{eqnarray*}
e_u^{[r]}(b)\cdot e_u^{[-1-f]}(\tau^{-r}b)^{\star} 
&=&e_u^{[r]}(b_1,\dots, b_r)\cdot e_u^{[-1-f]}(\underbrace{b_{1-r},b_{2-r},\dots, b_{-1},b_0}_{r},b_1\dots, b_{f+1-r})^{\star}\\
&=&e_u^{[r]}(b_1,\dots, b_r)\cdot e_u^{[-1-f]}(\underbrace{-b_r,-b_{r-1},\dots, -b_2,-b_1}_{r},b_1\dots, b_{f+1-r})\\
&=&e_u^{[r-1-f]}(b_1,\dots, b_{f+1-r})=e_u^{[r-1-f]}(b).
\end{eqnarray*}
Thus $q_u^{[f|r-f-1]}=q_u^{[r]}\cdot e_u^{[f|-1-f]}(z|\tau^{-r}b)^{\star}$. In particular, we have
\begin{equation}\label{eq4443}
q_{r+a}^{[f|r-f-1]}=\sum_{\ell\leq r+a} q_\ell^{[r]}\cdot e_{r+a-\ell}^{[f|-1-f]}(z|\tau^{-r}b)^{\star}.
\end{equation}
On the other hand, by setting $s=r+a-k$, $m=r-k$, $t=r+a$, $n=f-k$ for $k=0,\dots,f$ in the identity of Lemma \ref{lem2}, we obtain
\begin{eqnarray*}
&&\sum_{\ell\leq r+a-k} q_{\ell}^{[r-k]}\cdot e_{r+a-\ell}^{[f|k-1-f]}(z|\tau^{k-r}b)^{\star}\\
&=&q_{r+a-k}^{[r-k-1]}\cdot e_{k}^{[f|k-1-f]}(z|\tau^{k-r}b)^{\star}+\sum_{\ell \leq r+a-k-1} q_{\ell}^{[r-k-1]}\cdot e_{r+a-\ell}^{[f|k-f]}(z|\tau^{k+1-r}b)^{\star}.
\end{eqnarray*}
We apply this to the right hand side of (\ref{eq4443}) consecutively from $k=0$ to $k=f$, and obtain
\begin{eqnarray*}
q_{r+a}^{[f|r-f-1]}
&=&\sum_{\ell\leq r+a} q_\ell^{[r]}\cdot e_{r+a-\ell}^{[f|-1-f]}(z|\tau^{-r}b)^{\star}\\
&=&q_{r+a}^{[r-1]}\cdot e_{0}^{[f|-1-f]}(z|\tau^{-r}b)^{\star}+\sum_{\ell \leq r+a-1} q_{\ell}^{[r-1]}\cdot e_{r+a-\ell}^{[f|-f]}(z|\tau^{1-r}b)^{\star}\\
&=&\sum_{k=0}^1 q_{r+a-k}^{[r-1-k]}\cdot e_{k}^{[f|k-1-f]}(z|\tau^{k-r}b)^{\star}+\sum_{\ell \leq r+a-2} q_{\ell}^{[r-2]}\cdot e_{r+a-\ell}^{[f|1-f]}(z|\tau^{2-r}b)^{\star}\\
&=&\cdots\\
&=&\sum_{k=0}^f q_{r+a-k}^{[r-1-k]}\cdot e_{k}^{[f|k-1-f]}(z|\tau^{k-r}b)^{\star}+\sum_{\ell \leq r+a-f-1} q_{\ell}^{[r-f-1]}\cdot e_{r+a-\ell}^{[f|0]}(z|\tau^{f+1-r}b)^{\star}.
\end{eqnarray*}
The last summation is zero since $r+a-\ell>f$ and $e_u^{[f|0]}$ is a degree $f$ polynomial in $u$. Thus we obtain the desired equation.
\end{proof}
\begin{rem}\label{remprop2}
The identity of Proposition \ref{prop2} can be also written as
\[
q_{r+a}^{[f|r-f-1]}(x;z|b)=\sum_{k\in \ZZ} q_{r-k+a}^{[r-k-1]}(x|b)\cdot e_{k}^{[f|k-1-f]}(z|\tau^{k-r}b)^{\star}.
\]
since, if $k>f$, then $e_u^{[f|k-1-f]}(z|\tau^{k-r}b)$ is a degree $k-1$ polynomial in $u$ so that $e_{k}^{[f|k-1-f]}(z|\tau^{k-r}b)=0$.
\end{rem}
\section{Schur-Pfaffian formula}\label{secPf}
In this section, we review the basic properties of Schur-Pfaffian and then prove a Pfaffian formula of the flagged factorial $Q$-functions $Q_{\lambda}(x;z|b)$.
\subsection{Schur-Pfaffian and factorial $Q$-functions}
Let $\alpha=(\alpha_1,\dots,\alpha_r) \in \ZZ^r$ be a sequence of integers. Consider the Laurent series in variables $t_1,\dots, t_r$ 
\[
f^{\alpha}(t)=t_1^{\alpha_1}\cdots t_r^{\alpha_r} \prod_{1\leq i<j\leq r}\frac{1-t_i/t_j}{1+t_i/t_j}
\]
where we expand $\frac{1}{1+t_i/t_j}$ as the series $\sum_{m\geq 0} (- t_it_j^{-1})^m$. Consider sequences of indeterminants
\[
c^{(i)}= \left(\, c^{(i)}_m\, \right)_{m \in \ZZ} \ \ \ \ \  \ (i=1,\dots,r).
\]  
The {\it Schur-Pfaffain} $\Pf\left[c^{(1)}_{\alpha_1}\cdots c^{(r)}_{\alpha_r}\right]$ associated to $\alpha$ is defined by replacing each monomial $t_1^{m_1}\cdots t_r^{m_r}$ in $f^{\alpha}(t)$ by $c^{(1)}_{m_1}\cdots c^{(r)}_{m_r}$.

Below in Lemma \ref{lem3} and \ref{lem4}, we list well-known properties without proofs (cf. \cite{IkedaMatsumura}).
\begin{lem}\label{lem3}\ 
\begin{enumerate}[$(1)$]
\item  If $\Pf[c^{(i)}_{\alpha_i}c^{(j)}_{\alpha_j}]+\Pf[c^{(j)}_{\alpha_j}c^{(i)}_{\alpha_i}]=0$ for all $1\leq i,j\leq r$, then for any $w\in S_r$, we have
\[
\Pf\left[c^{(1)}_{\alpha_1}\cdots c^{(r)}_{\alpha_r}\right] = \sign(w) \Pf\left[c^{(1)}_{\alpha_{w(1)}}\cdots c^{(r)}_{\alpha_{w(r)}}\right].
\]
\item If $c_m^{(i)}=k a_m + \ell b_m$ with variables $a=(a_m)_{m\in \ZZ}$ and $b=(b_m)_{m\in \ZZ}$, we have
\[
\Pf\left[c^{(1)}_{\alpha_1}\cdots c^{(i)}_{\alpha_i} \cdots c^{(r)}_{\alpha_r}\right] = k \Pf\left[c^{(1)}_{\alpha_1}\cdots a_{\alpha_i} \cdots c^{(r)}_{\alpha_r}\right] + \ell \Pf\left[c^{(1)}_{\alpha_1}\cdots b_{\alpha_i} \cdots c^{(r)}_{\alpha_r}\right].
\]
\item If $r$ is even, then
\[
\Pf\left[c^{(1)}_{\alpha_1}\cdots c^{(r)}_{\alpha_r}\right] = \Pf\left( \Pf[c^{(i)}_{\alpha_i}c^{(j)}_{\alpha_j}]   \right)_{1 \leq i<j\leq r}
\]
where the right hand side is the Pfaffian of the $r\times r$ skew symmetric matrix $\left(  \Pf[c^{(i)}_{\alpha_i}c^{(j)}_{\alpha_j}]   \right)_{1 \leq i<j\leq r}$ with $(i,j)$-entry 
\[
\Pf[c^{(i)}_{\alpha_i}c^{(j)}_{\alpha_j}] = c^{(i)}_{\alpha_i}c^{(j)}_{\alpha_j} + 2\sum_{k\geq 1} (-1)^k c^{(i)}_{\alpha_i+k}c^{(j)}_{\alpha_j-k}
\]
for $i<j$
\end{enumerate}
\end{lem}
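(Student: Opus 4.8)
The plan is to handle the three parts in the order (2), (3), (1), deducing the antisymmetry in (1) from the matrix-Pfaffian expansion in (3).

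Part (2) is immediate from the definition. The coefficient of a monomial $t_1^{m_1}\cdots t_r^{m_r}$ in $f^{\alpha}(t)$ is an integer independent of the sequences $c^{(i)}$, and the Schur-Pfaffian is obtained by replacing that monomial with $c^{(1)}_{m_1}\cdots c^{(r)}_{m_r}$ and summing. If $c^{(i)}_m = k\,a_m + \ell\, b_m$ for every $m$, then expanding $c^{(1)}_{m_1}\cdots(k\,a_{m_i}+\ell\,b_{m_i})\cdots c^{(r)}_{m_r}$ and collecting the two resulting sums gives exactly $k\Pf[\cdots a_{\alpha_i}\cdots] + \ell\Pf[\cdots b_{\alpha_i}\cdots]$, so the substitution is linear in the $i$-th slot.

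For part (3) I would begin from the classical Schur Pfaffian identity, which as an equality of rational functions reads, for even $r$,
\[
\prod_{1\le i<j\le r}\frac{t_i-t_j}{t_i+t_j} = \Pf\left(\frac{t_i-t_j}{t_i+t_j}\right)_{1\le i<j\le r},
\]
the right side being the Pfaffian of the skew-symmetric matrix with those $(i,j)$-entries. I would then expand every factor $\tfrac{1}{t_i+t_j}$ (for $i<j$) as $\tfrac{1}{t_j}\sum_{m\ge0}(-t_i/t_j)^m$, i.e.\ in the single iterated regime $t_1\prec\cdots\prec t_r$ used to define $f^{\alpha}$. Then the left side becomes $\prod_{i<j}g(t_i,t_j)$ with $g(t_i,t_j)=\tfrac{1-t_i/t_j}{1+t_i/t_j}$, and the right side becomes a signed sum over perfect matchings $\pi$ of $\{1,\dots,r\}$, each pair written with smaller index first so that every factor uses the same expansion. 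Multiplying through by $\prod_i t_i^{\alpha_i}$ and applying the monomial-replacement map finishes the proof, the crucial point being that although this map is \emph{not} multiplicative in general, it \emph{is} multiplicative across any partition of the variable set into disjoint blocks, and a perfect matching is exactly such a partition; hence the replacement of $\prod_{(i,j)\in\pi}t_i^{\alpha_i}t_j^{\alpha_j}g(t_i,t_j)$ factors as $\prod_{(i,j)\in\pi}\Pf[c^{(i)}_{\alpha_i}c^{(j)}_{\alpha_j}]$. Collecting signs gives (3), and the explicit entry formula follows by expanding $t_i^{\alpha_i}t_j^{\alpha_j}g(t_i,t_j)=t_i^{\alpha_i}t_j^{\alpha_j}\big(1+2\sum_{k\ge1}(-1)^k(t_i/t_j)^k\big)$ and replacing.

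For part (1), I would first observe that the hypothesis $\Pf[c^{(i)}_{\alpha_i}c^{(j)}_{\alpha_j}]+\Pf[c^{(j)}_{\alpha_j}c^{(i)}_{\alpha_i}]=0$ makes the assignment $(i,j)\mapsto\Pf[c^{(i)}_{\alpha_i}c^{(j)}_{\alpha_j}]$ skew-symmetric, so the matrix $A$ appearing in (3) admits the uniform description $A_{ij}=\Pf[c^{(i)}_{\alpha_i}c^{(j)}_{\alpha_j}]$ for \emph{all} $i\ne j$. When $r$ is even, permuting the columns by $w$ replaces $A$ by the matrix with $(i,j)$-entry $A_{w(i),w(j)}$, that is by $P_w^{\top}AP_w$; since $\Pf(P_w^{\top}AP_w)=\sign(w)\Pf(A)$, part (3) yields (1). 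For odd $r$ I would adjoin a $0$-th sequence with $c^{(0)}_0=1$ and $c^{(0)}_m=0$ otherwise: because $g(t_0,t_j)$ has constant term $1$, extracting the $t_0$-degree-zero part shows $\Pf[c^{(0)}_0 c^{(1)}_{\alpha_1}\cdots c^{(r)}_{\alpha_r}]=\Pf[c^{(1)}_{\alpha_1}\cdots c^{(r)}_{\alpha_r}]$, reducing to the even case. One then permutes only the indices $1,\dots,r$ while fixing $0$; since $0$ is the minimal index it never lies in an inversion, so the uniform skew description is needed only for pairs among $1,\dots,r$, where the hypothesis holds. The main obstacle, and the one genuinely delicate point, is the compatibility between Schur's identity (an equality of rational functions) and the fixed formal-expansion convention built into the Schur-Pfaffian: one must verify that both sides are expanded in the single regime $t_1\prec\cdots\prec t_r$, so that the rational identity descends term by term to Laurent series, and that the replacement map really does factor over the blocks of a matching. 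Once this bookkeeping is arranged, (1)–(3) follow formally, the odd-$r$ augmentation being the one extra step requiring care precisely because the adjoined row breaks the naive skew-symmetry yet, being indexed by the minimal element, never enters a transposition that would require it.
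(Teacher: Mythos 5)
The paper never proves this lemma: it is introduced with ``we list well-known properties without proofs (cf.\ \cite{IkedaMatsumura})'', and only the remark after it notes that part (3) comes from Schur's identity. So your proposal has to be judged on its own merits, and on those merits it is essentially correct, with part (3) following exactly the route the paper's remark points to: expand Schur's identity in the single iterated regime, expand the Pfaffian over perfect matchings, and use that the monomial-replacement map, while not multiplicative in general, does factor over products whose factors involve disjoint sets of the variables $t_i$ --- you correctly isolate this as the crux. Part (2) is the right one-line linearity argument. One small debt in (3): by starting from $\prod_{i<j}\frac{t_i-t_j}{t_i+t_j}=\Pf\left(\frac{t_i-t_j}{t_i+t_j}\right)$ rather than from the form quoted in the paper's remark (entries $\frac{1-t_i/t_j}{1+t_i/t_j}$ on both sides), you owe a sign check: the product side acquires $(-1)^{r(r-1)/2}$ and the Pfaffian side $(-1)^{r/2}$ from $\Pf(-A)=(-1)^{r/2}\Pf(A)$. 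These do cancel because $r(r-1)/2-r/2=r(r-2)/2$ is even when $r$ is even, but you assert rather than verify this; quoting the identity in the paper's form avoids the issue entirely.

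The substantive caveat is in (1). Your step ``permuting the columns by $w$ replaces $A$ by the matrix with $(i,j)$-entry $A_{w(i),w(j)}$'' is valid only if the permutation moves the \emph{whole symbols} $c^{(i)}_{\alpha_i}$, i.e.\ if the right-hand side of (1) is read as $\Pf\left[c^{(w(1))}_{\alpha_{w(1)}}\cdots c^{(w(r))}_{\alpha_{w(r)}}\right]$. Under the literal reading of the display, where the superscript $(i)$ stays in slot $i$ and only the subscripts are permuted, part (3) produces the matrix with entries $\Pf[c^{(i)}_{\alpha_{w(i)}}c^{(j)}_{\alpha_{w(j)}}]$, which are \emph{not} the $A_{w(i),w(j)}$ and which the hypothesis does not control; in fact the literal statement is false. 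For example, take $r=2$, $(\alpha_1,\alpha_2)=(1,2)$, $c^{(1)}_0=1$, $c^{(1)}_1=c^{(1)}_2=2$, $c^{(2)}_1=1$, $c^{(2)}_2=c^{(2)}_3=2$, and all other entries $0$: then $\Pf[c^{(1)}_1c^{(2)}_2]=4-4=0$, $\Pf[c^{(2)}_2c^{(1)}_1]=4-4=0$, and $\Pf[c^{(1)}_1c^{(1)}_1]=\Pf[c^{(2)}_2c^{(2)}_2]=0$, so the hypothesis holds for all $i,j$, yet $\Pf[c^{(1)}_2c^{(2)}_1]=2\neq 0$, so $\Pf[c^{(1)}_1c^{(2)}_2]\neq -\Pf[c^{(1)}_2c^{(2)}_1]$. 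The symbol-moving reading is clearly the intended one --- it is what Lemma \ref{lem6} must mean for the proof of Theorem \ref{mainthm} to go through, where $\Pf[q^{[\mu_{w(1)}-1]}_{\mu_{w(1)}}\cdots q^{[\mu_{w(r)}-1]}_{\mu_{w(r)}}]$ is rewritten as $\sgn(w)\,Q_{\mu}(x|b)$ --- so your argument proves the statement that is actually needed; but you should say explicitly that this is the interpretation you adopt, since your proof silently depends on it. Granting that reading, the even case (the hypothesis giving the uniform skew description of $A$, plus $\Pf(P_w^{\top}AP_w)=\sign(w)\Pf(A)$) and the odd case (adjoining $c^{(0)}$ with $c^{(0)}_m=\delta_{m,0}$ and extracting the $t_0$-degree-zero part, which works because each $g(t_0,t_j)$ involves only nonnegative powers of $t_0$ with constant term $1$, and the fixed minimal index never needs the skew relation) are both correct.
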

\begin{rem}
Lemma \ref{lem3} (3) follows from the identity 
\[
\prod_{1\leq i<j\leq r}\frac{1-t_i/t_j}{1+t_i/t_j} = \Pf \left( \frac{1-t_i/t_j}{1+t_i/t_j}\right)_{1\leq i<j \leq r}.
\]
for $r$ even, which is due to Schur \cite{Schur}.
\end{rem}
\begin{lem}\label{lem4}
We denote the substitution $c_m^{[i]} = 0$ for all $m<0$ and $i=1,\dots,r$ by $\displaystyle\Pf\left[c^{(1)}_{\alpha_1}\cdots c^{(r)}_{\alpha_r}\right]_{\geq 0}$. We have
\begin{enumerate}[$(1)$]
\item $\displaystyle\Pf\left[c^{(1)}_{\alpha_1}\cdots c^{(r)}_{\alpha_r}\right]_{\geq 0}$ is a polynomial in $c_m^{(i)}$'s ($m\geq 0$).
\item If $\alpha_r=0$, then $\displaystyle\Pf\left[c^{(1)}_{\alpha_1}\cdots c^{(r)}_{\alpha_r}\right]_{\geq 0}=\displaystyle\Pf\left[c^{(1)}_{\alpha_1}\cdots c^{(r)}_{\alpha_{r-1}}\right]_{\geq 0}$.
\end{enumerate}
\item If $\alpha_r<0$, then $\displaystyle\Pf\left[c^{(1)}_{\alpha_1}\cdots c^{(r)}_{\alpha_r}\right]_{\geq 0}=0$.
\end{lem}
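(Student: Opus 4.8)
The plan is to read all three statements off one explicit monomial expansion of the defining series $f^{\alpha}(t)$. First I would expand each factor with $i<j$ as
\[
\frac{1-t_i/t_j}{1+t_i/t_j} = 1 + 2\sum_{m\geq 1}(-1)^m (t_i/t_j)^m,
\]
which exhibits it as a power series in $t_it_j^{-1}$ using only \emph{nonnegative} powers of $t_it_j^{-1}$. Indexing the power taken from the $(i,j)$-factor by $m_{ij}\in\ZZ_{\geq 0}$, the series becomes $\sum_{(m_{ij})}\big(\prod_{i<j}\varepsilon(m_{ij})\big)\prod_k t_k^{\beta_k}$, where $\varepsilon(0)=1$, $\varepsilon(m)=2(-1)^m$ for $m\geq 1$, and the exponent of $t_k$ is
\[
\beta_k = \alpha_k + \sum_{j>k}m_{kj} - \sum_{i<k}m_{ik}.
\]
The single fact driving everything is the degree identity $\sum_k\beta_k = \sum_k\alpha_k =: |\alpha|$, since each $m_{ij}$ contributes $+m_{ij}$ to $\beta_i$ and $-m_{ij}$ to $\beta_j$.

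For part $(3)$ I would note that when $\alpha_r<0$ the last exponent $\beta_r=\alpha_r-\sum_{i<r}m_{ir}$ satisfies $\beta_r\leq\alpha_r<0$ for every $(m_{ij})$; hence the substitution $c^{(r)}_m=0$ $(m<0)$ annihilates $c^{(r)}_{\beta_r}$ in every monomial, giving $\Pf[\cdots]_{\geq 0}=0$. For part $(2)$, when $\alpha_r=0$ one has $\beta_r=-\sum_{i<r}m_{ir}\leq 0$, so a monomial survives the $\geq 0$ substitution only if $\beta_r=0$, which forces $m_{ir}=0$ for all $i<r$. With those entries set to zero, the remaining exponents $\beta_k$ $(k<r)$ coincide exactly with the exponents governing the $(r-1)$-variable Schur--Pfaffian $\Pf[c^{(1)}_{\alpha_1}\cdots c^{(r-1)}_{\alpha_{r-1}}]$, and the surviving last factor is $c^{(r)}_0=1$ under the standing normalization that each coefficient sequence has constant term $1$; this yields the claimed equality.

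For part $(1)$ I would establish finiteness of the surviving sum. A surviving monomial has all $\beta_k\geq 0$ with $\sum_k\beta_k=|\alpha|$, so each $\beta_k\in[0,|\alpha|]$ (and nothing survives when $|\alpha|<0$); thus only finitely many tuples $(\beta_1,\dots,\beta_r)$ occur. I would then bound the $m_{ij}$ themselves by downward induction on the column index $k=r,r-1,\dots,2$: the inequality $\sum_{i<k}m_{ik}\leq \alpha_k+\sum_{j>k}m_{kj}$ coming from $\beta_k\geq 0$ bounds all $m_{ik}$ $(i<k)$ once the entries $m_{kj}$ $(j>k)$ have been bounded at earlier steps. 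Hence only finitely many $(m_{ij})$ contribute, each coefficient is a finite integer, and $\Pf[\cdots]_{\geq 0}$ is a polynomial in the $c^{(i)}_m$ with $m\geq 0$.

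The computations are elementary; the only genuine subtlety is the finiteness in part $(1)$, where one must rule out infinitely many $(m_{ij})$ collapsing onto the same surviving monomial, handled by the triangular bounding argument above. The only other point requiring care is the normalization $c^{(i)}_0=1$, which is what makes part $(2)$ an exact equality rather than an identity up to the factor $c^{(r)}_0$.
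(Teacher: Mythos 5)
Your proof is correct, and it is worth noting that the paper itself offers no argument to compare against: Lemma \ref{lem4} is listed (together with Lemma \ref{lem3}) as a well-known property stated without proof, with a citation to \cite{IkedaMatsumura}. Your monomial-expansion argument is therefore a genuine, self-contained derivation from the paper's definition of the Schur--Pfaffian. The bookkeeping is sound: the expansion $\frac{1-t_i/t_j}{1+t_i/t_j}=1+2\sum_{m\geq 1}(-1)^m(t_i/t_j)^m$ matches the paper's convention of expanding in nonnegative powers of $t_it_j^{-1}$, the exponent formula $\beta_k=\alpha_k+\sum_{j>k}m_{kj}-\sum_{i<k}m_{ik}$ and the conservation law $\sum_k\beta_k=\sum_k\alpha_k$ are right, and parts $(2)$ and $(3)$ follow exactly as you say. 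You also correctly identified that the real content is the finiteness in part $(1)$, and your triangular bounding (column $r$ first, since $\sum_{i<r}m_{ir}\leq\alpha_r$, then downward induction on the column index) does rule out infinitely many tuples $(m_{ij})$ collapsing onto one surviving monomial. Finally, your caveat about part $(2)$ is apt: with the $c^{(i)}_m$ as genuine indeterminates, your expansion shows the left side equals $c^{(r)}_0\cdot\Pf\left[c^{(1)}_{\alpha_1}\cdots c^{(r-1)}_{\alpha_{r-1}}\right]_{\geq 0}$, so the lemma as stated implicitly assumes the normalization $c^{(i)}_0=1$; this is harmless here because in every application in the paper one substitutes $c^{(i)}_m=q_m^{[k|\ell]}$, whose generating series has constant term $1$. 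What your approach buys, beyond filling in a proof the paper delegates to the literature, is that it makes all three statements consequences of a single explicit expansion rather than of separate ad hoc manipulations.
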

By the work of Kazarian \cite{Kazarian} and Ikeda \cite{Ikeda2007}, it is known that the factorial $Q$-functions $Q_{\lambda}(x|b)$ of Ivanov \cite{Ivanov04} can be expressed as a Schur-Pfaffian: for a strict partition $\lambda=(\lambda_1,\dots,\lambda_r)$, 
\[
Q_{\lambda}(x|b) = \Pf\left[q_{\lambda_1}^{[\lambda_1-1]}q_{\lambda_2}^{[\lambda_2-1]}\cdots q_{\lambda_r}^{[\lambda_r-1]}\right]:=\left.\Pf\left[c^{(1)}_{\lambda_1}c^{(2)}_{\lambda_2}\cdots c^{(r)}_{\lambda_r}\right]\right|_{c^{(i)}_{m}=q_{m}^{[\lambda_i-1]}, \forall i,m}.
\]
\begin{lem}\label{lem5}
For $k, \ell\in \ZZ_{\geq 1}$, we have
\[
\Pf\left[q_{k}^{[k-1]}q_{\ell}^{[\ell-1]}\right]+ \Pf\left[q_{\ell}^{[\ell-1]}q_{k}^{[k-1]}\right]=0.
\]
\end{lem}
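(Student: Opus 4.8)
The plan is to reduce the claimed skew-symmetry to an elementary vanishing statement for a product of generating functions. Since the Pfaffian here involves only two columns, I would first invoke Lemma~\ref{lem3}~(3) with $r=2$ to write
\[
\Pf\left[q_k^{[k-1]}q_\ell^{[\ell-1]}\right]=q_k^{[k-1]}q_\ell^{[\ell-1]}+2\sum_{j\geq 1}(-1)^j q_{k+j}^{[k-1]}q_{\ell-j}^{[\ell-1]},
\]
and the analogous expression for $\Pf\left[q_\ell^{[\ell-1]}q_k^{[k-1]}\right]$, obtained by exchanging the roles of the two sequences.

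Next I would add the two expressions. Because all the $q_m^{[\,\cdot\,]}$ lie in the commutative ring $\Gamma[b]$, the two leading terms coincide, and after the substitution $j\mapsto -j$ in the second one-sided sum the two sums merge into a single bilateral sum. Concretely, the goal of this step is to establish
\[
\Pf\left[q_k^{[k-1]}q_\ell^{[\ell-1]}\right]+\Pf\left[q_\ell^{[\ell-1]}q_k^{[k-1]}\right]=2\sum_{i\in\ZZ}(-1)^i q_{k+i}^{[k-1]}q_{\ell-i}^{[\ell-1]}.
\]
Reindexing, the right-hand side is, up to a sign, exactly the coefficient of $u^{k+\ell}$ in the product $q_{-u}^{[k-1]}q_u^{[\ell-1]}$, once one records that $\sum_m(-1)^m q_m^{[k-1]}u^m=q_{-u}^{[k-1]}$.

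Finally I would exploit the defining factorization $q_u^{[m]}=q_u(x)\,e_u^{[m]}(b)$ together with the identity $q_u(x)\,q_{-u}(x)=1$, which is immediate from the product formula for $q_u(x)$. This collapses $q_{-u}^{[k-1]}q_u^{[\ell-1]}$ to $e_{-u}^{[k-1]}(b)\,e_u^{[\ell-1]}(b)=\prod_{i=1}^{k-1}(1-b_iu)\prod_{i=1}^{\ell-1}(1+b_iu)$, a polynomial in $u$ of degree $(k-1)+(\ell-1)=k+\ell-2$. Since $k,\ell\geq 1$, this degree is strictly less than $k+\ell$, so the sought coefficient---and hence the sum of the two Pfaffians---vanishes. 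The only genuinely delicate point is the bookkeeping in the middle step: the Schur--Pfaffian is built from a one-directional expansion of $(1-t_i/t_j)/(1+t_i/t_j)$, so the two orderings correspond to expansions in opposite directions, and some care is needed to confirm that their sum reassembles the full two-sided Laurent coefficient rather than a truncation of it.
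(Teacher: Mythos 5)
Your proof is correct and follows essentially the same route as the paper's: both reduce the sum of the two Pfaffians to the bilateral sum $2\sum_{i\in\ZZ}(-1)^i q_{k+i}^{[k-1]}q_{\ell-i}^{[\ell-1]}$, identify it (up to an irrelevant sign $(-1)^k$) with the coefficient of $u^{k+\ell}$ in $2q_{-u}^{[k-1]}q_u^{[\ell-1]} = 2e_{-u}^{[k-1]}(b)\,e_u^{[\ell-1]}(b)$ using $q_u(x)q_{-u}(x)=1$, and conclude from the degree bound $k+\ell-2<k+\ell$. The only difference is that you make explicit, via the two-column expansion in Lemma \ref{lem3} (3) and the reindexing $j\mapsto -j$, the step the paper asserts without comment; this also disposes of the ``delicate point'' you flag, since $q_m^{[\,\cdot\,]}=0$ for $m<0$ makes both one-sided sums finite and their merger into the full two-sided sum is exact.
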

\begin{proof}
The left hand side equals to $2\sum_{r\in \ZZ}(-1)^rq_{k+r}^{[k-1]} q_{\ell-r}^{[\ell-1]}$, 
which is the coefficient of $u^{k+\ell}$ in $2 q_{-u}^{[k-1]}q_{u}^{[\ell-1]}= 2  e_{-u}^{[k-1]}(b)e_{u}^{[\ell-1]}(b)$,  a polynomial in $u$ of degree $k+\ell-2$. Therefore it is zero.
\end{proof}
Lemma \ref{lem3} (1) and Lemma \ref{lem5} imply the following.
\begin{lem}\label{lem6}
For a sequence of positive integers $(\alpha_1,\dots,\alpha_r)$ and $w\in S_r$, we have
\[
\Pf\left[q_{\alpha_1}^{[\alpha_1-1]}\cdots q_{\alpha_r}^{[\alpha_r-1]}\right] = \sgn(w)\Pf\left[q_{\alpha_{w(1)}}^{[\alpha_1-1]}\cdots q_{\alpha_{w(r)}}^{[\alpha_r-1]}\right].
\]
If particular, if $\alpha_i=\alpha_j$ for some $i\not=j$, we have $\Pf\left[q_{\alpha_1}^{[\alpha_1-1]}\cdots q_{\alpha_r}^{[\alpha_r-1]}\right]=0$.
\end{lem}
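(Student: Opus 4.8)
The plan is to apply Lemma \ref{lem3} (1) directly, after specializing the abstract indeterminates to the relevant $q$-families. Concretely, I would set $c^{(i)}_m := q_m^{[\alpha_i-1]}$ for each $i=1,\dots,r$ and every $m\in\ZZ$, so that the left-hand side $\Pf[q_{\alpha_1}^{[\alpha_1-1]}\cdots q_{\alpha_r}^{[\alpha_r-1]}]$ becomes exactly $\Pf[c^{(1)}_{\alpha_1}\cdots c^{(r)}_{\alpha_r}]$, while the Pfaffian on the right-hand side, whose $i$-th column is $q_{\alpha_{w(i)}}^{[\alpha_i-1]}=c^{(i)}_{\alpha_{w(i)}}$, becomes $\Pf[c^{(1)}_{\alpha_{w(1)}}\cdots c^{(r)}_{\alpha_{w(r)}}]$. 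Thus the asserted identity is precisely the conclusion of Lemma \ref{lem3} (1) for this choice of $c^{(i)}$, and everything reduces to checking that the hypothesis of that lemma holds.

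The hypothesis requires $\Pf[c^{(i)}_{\alpha_i}c^{(j)}_{\alpha_j}]+\Pf[c^{(j)}_{\alpha_j}c^{(i)}_{\alpha_i}]=0$ for all $i,j$. Unwinding the specialization, this reads $\Pf[q_{\alpha_i}^{[\alpha_i-1]}q_{\alpha_j}^{[\alpha_j-1]}]+\Pf[q_{\alpha_j}^{[\alpha_j-1]}q_{\alpha_i}^{[\alpha_i-1]}]=0$, which is exactly Lemma \ref{lem5} with $k=\alpha_i$ and $\ell=\alpha_j$. The point to keep in mind is that in the hypothesis of Lemma \ref{lem3} (1) each family $c^{(i)}$ is evaluated at its \emph{original} index $\alpha_i$, so the superscript $\alpha_i-1$ and the subscript $\alpha_i$ are aligned (index equals superscript plus one); this is precisely the ``diagonal'' configuration for which Lemma \ref{lem5} is stated, and since the $\alpha_i$ are positive integers the requirement $k,\ell\in\ZZ_{\geq1}$ is automatically met. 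Hence the hypothesis holds and Lemma \ref{lem3} (1) yields the signed permutation identity.

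For the final assertion, suppose $\alpha_i=\alpha_j$ for some $i\neq j$ and take $w$ to be the transposition $(i\,j)$, so that $\sgn(w)=-1$. Because $\alpha_i=\alpha_j$, permuting the subscripts by $w$ leaves every column entry unchanged: in column $i$ the entry becomes $q_{\alpha_{w(i)}}^{[\alpha_i-1]}=q_{\alpha_j}^{[\alpha_i-1]}=q_{\alpha_i}^{[\alpha_i-1]}$, symmetrically in column $j$, and all other columns are fixed. The two Schur-Pfaffians therefore coincide, and the identity collapses to $\Pf[q_{\alpha_1}^{[\alpha_1-1]}\cdots q_{\alpha_r}^{[\alpha_r-1]}]=-\Pf[q_{\alpha_1}^{[\alpha_1-1]}\cdots q_{\alpha_r}^{[\alpha_r-1]}]$, forcing the common value to be $0$. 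There is no genuine obstacle here; the only care needed is the bookkeeping just described, namely recognizing that the two-column antisymmetry is invoked only at the aligned index pairs $(\alpha_i,\alpha_j)$, not at the shuffled indices that appear on the right-hand side.
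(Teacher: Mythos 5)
Your proof is correct and is essentially the paper's own argument: the paper deduces this lemma precisely by specializing $c^{(i)}_m = q_m^{[\alpha_i-1]}$ in Lemma \ref{lem3}~(1), verifying the antisymmetry hypothesis at the aligned index pairs $(\alpha_i,\alpha_j)$ via Lemma \ref{lem5}, and obtaining the vanishing statement from the transposition argument you describe. No gaps.
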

\subsection{Schur-Pfaffian formula of flagged factorial $Q$-functions}
\begin{thm}\label{mainthm}
Let $(\lambda,f)$ be a flagged strict partition of length $r$ such that $(\mathrm{a})$ $\lambda_i-f_i \geq \lambda_j-f_j$ for all $1\leq i<j \leq r$ and $(\mathrm{b})$ $\lambda_{r-1}-f_{r-1}>0$.  We have
\[
Q_{\lambda,f}(x,z|b)=\Pf\left[q_{\lambda_1}^{[f_1|\lambda_1-f_1-1]}q_{\lambda_2}^{[f_2|\lambda_2-f_2-1]}\dots q_{\lambda_r}^{[f_r|\lambda_r-f_r-1]}\right].
\]
\end{thm}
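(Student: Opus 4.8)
The plan is to expand the Schur--Pfaffian on the right-hand side one column at a time, reduce each resulting Pfaffian to a classical factorial $Q$-function, and then recognize the coefficient of each $Q_\mu(x|b)$ as precisely the row-strict flagged skew Schur polynomial $\widetilde{s}_{\bar\lambda/\bar\mu,f}(z|\bfb)^{\star}$ appearing in the decomposition of $Q_{\lambda,f}(x;z|b)$ in Proposition \ref{prop1}. Since the $Q_\mu(x|b)$ form a $\ZZ[b]$-basis of $\Gamma[b]$, it then suffices to match coefficients.

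First I would rewrite the entries of the Pfaffian. By Proposition \ref{prop2} in the form of Remark \ref{remprop2}, the entire defining sequence $c^{(i)}=\big(q_m^{[f_i|\lambda_i-f_i-1]}\big)_{m\in\ZZ}$ equals, over $\ZZ[z,b]$, the linear combination $\sum_{k_i} e_{k_i}^{[f_i|k_i-1-f_i]}(z|\tau^{k_i-\lambda_i}b)^{\star}\cdot\big(q_{m-k_i}^{[\lambda_i-k_i-1]}\big)_m$ of the shifts of the sequences $q^{[\lambda_i-k_i-1]}$, with the $e$'s as scalar coefficients. Applying the multilinearity of the Schur--Pfaffian (Lemma \ref{lem3}(2)) in each of the $r$ columns, and using that shifting a defining sequence by $k_i$ shifts the corresponding index by $k_i$ (immediate from the definition in \S\ref{secPf}), I obtain
\[
\Pf\left[q_{\lambda_1}^{[f_1|\lambda_1-f_1-1]}\cdots q_{\lambda_r}^{[f_r|\lambda_r-f_r-1]}\right]=\sum_{k_1,\dots,k_r}\left(\prod_{i=1}^r e_{k_i}^{[f_i|k_i-1-f_i]}(z|\tau^{k_i-\lambda_i}b)^{\star}\right)\Pf\left[q_{\lambda_1-k_1}^{[\lambda_1-k_1-1]}\cdots q_{\lambda_r-k_r}^{[\lambda_r-k_r-1]}\right].
\]

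Next I would evaluate the inner Pfaffians. Writing $\nu_i=\lambda_i-k_i$, Lemma \ref{lem6} shows that $\Pf[q_{\nu_1}^{[\nu_1-1]}\cdots q_{\nu_r}^{[\nu_r-1]}]$ is alternating in the $\nu_i$ and vanishes as soon as two of them agree. Hypotheses $(\mathrm a)$ and $(\mathrm b)$, together with the effective range $0\le k_i\le f_i$, force $\nu_i>0$ for $i\le r-1$, so at most the last entry can be nonpositive; the terms with $\nu_r<0$ vanish because $q_m^{[\ell]}=0$ for $m<0$, while $\nu_r=0$ is absorbed by $q_0^{[-1]}=1$. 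Hence every surviving inner Pfaffian equals $\sgn(w)\,Q_{\mu}(x|b)$, where $\mu$ is the strict partition obtained by sorting $\nu$ and $w$ the sorting permutation, via the Kazarian--Ikeda expression for $Q_\mu(x|b)$. Collecting the terms with a fixed $\mu$ and substituting $k_i=\lambda_i-\mu_{w(i)}$ turns the coefficient of $Q_\mu(x|b)$ into $\sum_{w\in S_r}\sgn(w)\prod_i e_{\lambda_i-\mu_{w(i)}}^{[f_i|\lambda_i-\mu_{w(i)}-1-f_i]}(z|\tau^{-\mu_{w(i)}}b)^{\star}$, i.e. the determinant of the matrix with $(i,j)$-entry $e_{\lambda_i-\mu_j}^{[f_i|\lambda_i-\mu_j-1-f_i]}(z|\tau^{-\mu_j}b)^{\star}$.

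Finally I would identify this determinant with $\widetilde{s}_{\bar\lambda/\bar\mu,f}(z|\bfb)^{\star}$. Because $\bar\lambda_i-\bar\mu_j-i+j=\lambda_i-\mu_j$, this is exactly the Jacobi--Trudi type formula for row-strict flagged skew factorial Schur polynomials proved in the appendix (Theorem \ref{thm:app1}), once the flags $f_i$ and the shift operator $\tau$ are matched. Comparing the outcome $\sum_\mu Q_\mu(x|b)\,\widetilde{s}_{\bar\lambda/\bar\mu,f}(z|\bfb)^{\star}$ with Proposition \ref{prop1} then finishes the proof; note that $(\mathrm a)$ and $(\mathrm b)$ guarantee that the strict partitions $\mu$ surviving in the expansion are exactly those with $\bar\mu\in\calP$, matching the index set of Proposition \ref{prop1}. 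I expect the main obstacle to be the bookkeeping in the middle step: reconciling the signs coming from the antisymmetry of the Schur--Pfaffian with those of the Jacobi--Trudi determinant, and carefully disposing of the degenerate boundary contributions so that exactly the admissible $\mu$ remain.
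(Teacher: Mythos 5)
Your proposal is correct and follows essentially the same route as the paper's proof: expand the Pfaffian entries via Proposition \ref{prop2}/Remark \ref{remprop2}, use multilinearity (Lemma \ref{lem3}(2)) and antisymmetry (Lemma \ref{lem6}) to reduce to the Kazarian--Ikeda Pfaffians $Q_\mu(x|b)$, identify the resulting coefficients as the determinants $\widetilde{s}_{\bar\lambda/\bar\mu,f}(z|\bfb)^{\star}$ via Theorem \ref{thm:app1}, and conclude with Proposition \ref{prop1}. The only cosmetic difference is that the paper splits explicitly into the cases $\lambda_r-f_r>0$ and $\lambda_r-f_r\le 0$, handling the $\mu_r=0$ boundary term by showing the last column of the $r\times r$ determinant collapses onto its $(r,r)$ entry, whereas you absorb that term uniformly through $q_0^{[-1]}=1$; the underlying mechanism is identical.
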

\begin{proof}
Let $(\nu_1,\dots, \nu_r) \in \ZZ^r$. By Proposition \ref{prop2} (and Remark \ref{remprop2}),  we have
\[
q_{\lambda_i+\nu_i}^{[f_i|\lambda_i-f_i-1]}
=\sum_{\alpha_i\in \ZZ} q_{\alpha_i+\nu_i}^{[\alpha_i-1]}\cdot e_{\lambda_i-\alpha_i}^{[f_i|\lambda_i-\alpha_i-f_i-1]}(z|\tau^{-\alpha_i}b)^{\star} \ \ \ \ \ \ \ (i=1,\dots,r).
\]
By linearity (Lemma \ref{lem3} (2)),
\begin{eqnarray*}
&&\Pf\left[q_{\lambda_1}^{[f_1|\lambda_1-f_1-1]}q_{\lambda_2}^{[f_2|\lambda_2-f_2-1]}\cdots q_{\lambda_r}^{[f_r|\lambda_r-f_r-1]}\right]\\&=&
\sum_{(\alpha_1,\dots,\alpha_r)\in \ZZ^r} 
\left(\prod_{i=1}^r  e_{\lambda_i-\alpha_i}^{[f_i|\lambda_i-\alpha_i-f_i-1]}(z|\tau^{-\alpha_i}b)^{\star}   \right)
\Pf\left[q_{\alpha_1}^{[\alpha_1-1]}q_{\alpha_2}^{[\alpha_2-1]}\cdots q_{\alpha_r}^{[\alpha_r-1]}\right].
\end{eqnarray*}

Suppose that $\lambda_r-f_r>0$. In this case, by the assumption (a), we have $\lambda_i-f_i>0$ for all $i=1,\dots,r$ so that $e_{\lambda_i-\alpha_i}^{[f_i|\lambda_i-\alpha_i-f_i-1]}(z|\tau^{-\alpha_i}b)^{\star}=0$ for $\alpha_i\leq 0$. Thus we have
\begin{eqnarray*}
&&\Pf\left[q_{\lambda_1}^{[f_1|\lambda_1-f_1-1]}q_{\lambda_2}^{[f_2|\lambda_2-f_2-1]}\cdots q_{\lambda_r}^{[f_r|\lambda_r-f_r-1]}\right]\\&=&
\sum_{(\alpha_1,\dots,\alpha_r)\in (\ZZ_{>0})^r} 
\left(\prod_{i=1}^r  e_{\lambda_i-\alpha_i}^{[f_i|\lambda_i-\alpha_i-f_i-1]}(z|\tau^{-\alpha_i}b)^{\star}   \right)
\Pf\left[q_{\alpha_1}^{[\alpha_1-1]}q_{\alpha_2}^{[\alpha_2-1]}\cdots q_{\alpha_r}^{[\alpha_r-1]}\right].
\end{eqnarray*}
By Lemma \ref{lem6}, we have
\begin{eqnarray*}
&&\Pf\left[q_{\lambda_1}^{[f_1|\lambda_1-f_1-1]}q_{\lambda_2}^{[f_2|\lambda_2-f_2-1]}\cdots q_{\lambda_r}^{[f_r|\lambda_r-f_r-1]}\right]\\
&=&\sum_{\mu\in \SP_r \atop{\mu_r>0}} 
\left(\sum_{w\in S_r} \sgn(w) \prod_{i=1}^r  e_{\lambda_i-\mu_{w(i)}}^{[f_i|\lambda_i-\mu_{w(i)}-f_i-1]}(z|\tau^{-\mu_{w(i)}}b)^{\star}\right)
Q_{\mu}(x|b)\\
&=&\sum_{\mu\in \SP_r \atop{\mu_r>0}} 
\det\left(e_{\lambda_i-\mu_j}^{[f_i|\lambda_i-\mu_j-f_i-1]}(z|\tau^{-\mu_j}b)^{\star}\right)_{1\leq i,j\leq r}
Q_{\mu}(x|b).
\end{eqnarray*}
It is easy to see that the determinant vanishes if there is $k$ such that $\lambda_k-\mu_k<0$. Thus the sum runs over all $\mu\in \SP_r$ such that $\mu_r>0$ and $\mu\subset \lambda$. In particular, $\bar\mu$ is a partition since $\mu_r>0$. Finally, by Theorem \ref{thm:app1}, we have
\begin{eqnarray*}
\det\left(e_{\lambda_i-\mu_j}^{[f_i|\lambda_i-\mu_j-f_i-1]}(z|\tau^{-\mu_j}b)\right)_{1\leq i,j\leq r}
&=&\det\left(e_{\bar\lambda_i-\bar\mu_j+j-i}^{[f_i|\bar\lambda_i-\bar\mu_j+j-i-f_i-1]}(z|\tau^{j-\bar\mu_j-1}b)\right)_{1\leq i,j\leq r}\\
&=&\widetilde{s}_{\bar\lambda/\bar\mu,f}(z|\bfb),
\end{eqnarray*}
where the assumption (a) implies the inequalities that must be satisfied by $(\overline{\lambda},f)$. Thus we obtain
\[
\Pf\left[q_{\lambda_1}^{[f_1|\lambda_1-f_1-1]}q_{\lambda_2}^{[f_2|\lambda_2-f_2-1]}\cdots q_{\lambda_r}^{[f_r|\lambda_r-f_r-1]}\right]\\
=\sum_{\mu\in \SP \atop{\mu\subset \lambda \atop{\bar\mu \in\calP}}} 
\widetilde{s}_{\bar\lambda/\bar\mu,f}(z|\bfb)^{\star}\cdot 
Q_{\mu}(x|b),
\]
and finally the claim follows from Proposition \ref{prop1}. Here note that $\widetilde{s}_{\bar\lambda/\bar\mu,f}(z|\bfb)^{\star}=0$ if $\mu_r=0$ since $\lambda_r-f_r>0$.

Suppose that $\mu_r-f_r\leq 0$. In this case, we have
\begin{eqnarray*}
&&\Pf\left[q_{\lambda_1}^{[f_1|\lambda_1-f_1-1]}q_{\lambda_2}^{[f_2|\lambda_2-f_2-1]}\cdots q_{\lambda_r}^{[f_r|\lambda_r-f_r-1]}\right]\\
&=&
\sum_{(\alpha_1,\dots,\alpha_r)\in (\ZZ_{>0})^r} 
\left(\prod_{i=1}^r  e_{\lambda_i-\alpha_i}^{[f_i|\lambda_i-\alpha_i-f_i-1]}(z|\tau^{-\alpha_i}b)^{\star}   \right)
\Pf\left[q_{\alpha_1}^{[\alpha_1-1]}q_{\alpha_2}^{[\alpha_2-1]}\cdots q_{\alpha_r}^{[\alpha_r-1]}\right]  \\
&&+
\sum_{(\alpha_1,\dots,\alpha_{r-1})\in (\ZZ_{>0})^{r-1}}  e_{\lambda_r}^{[f_r|\lambda_r-f_r-1]}(z|b)^{\star} 
\left(\prod_{i=1}^{r-1}  e_{\lambda_i-\alpha_i}^{[f_i|\lambda_i-\alpha_i-f_i-1]}(z|\tau^{-\alpha_i}b)^{\star}   \right)
\Pf\left[q_{\alpha_1}^{[\alpha_1-1]}q_{\alpha_2}^{[\alpha_2-1]}\cdots q_{\alpha_{r-1}}^{[\alpha_{r-1}-1]}\right] \\
&=&\sum_{\mu\in \SP \atop{\mu\subset \lambda \atop{\mu_r>0}}} \widetilde{s}_{\bar\lambda/\bar\mu,f}(z|\bfb)^{\star}\cdot Q_{\mu}(x|b)+
\sum_{\mu\in \SP \atop{\mu\subset \lambda \atop{\mu_r=0}}} 
e_{\lambda_r}^{[f_r|\lambda_r-f_r-1]}(z|b)^{\star} \det\left(e_{\lambda_i-\mu_j}^{[f_i|\lambda_i-\mu_j-f_i-1]}(z|\tau^{-\mu_j}b)^{\star}\right)_{1\leq i,j\leq r-1}Q_{\mu}(x|b).
\end{eqnarray*}
Since $e_{\lambda_i-\mu_r}^{[f_i|\lambda_i-\mu_r-f_i-1]}(z|\tau^{-\mu_r}b)^{\star}=0$ for all $i=1,\dots,r-1$, we have
\[
e_{\lambda_r}^{[f_r|\lambda_r-f_r-1]}(z|b)^{\star} \det\left(e_{\lambda_i-\mu_j}^{[f_i|\lambda_i-\mu_j-f_i-1]}(z|\tau^{-\mu_j}b)^{\star}\right)_{1\leq i,j\leq r-1}
=\det\left(e_{\lambda_i-\mu_j}^{[f_i|\lambda_i-\mu_j-f_i-1]}(z|\tau^{-\mu_j}b)^{\star}\right)_{1\leq i,j\leq r}.
\]
Thus we obtain
\[
\Pf\left[q_{\lambda_1}^{[f_1|\lambda_1-f_1-1]}q_{\lambda_2}^{[f_2|\lambda_2-f_2-1]}\cdots q_{\lambda_r}^{[f_r|\lambda_r-f_r-1]}\right]\\
=\sum_{\mu\in \SP \atop{\mu\subset \lambda \atop{\bar\mu \in\calP}}} 
\widetilde{s}_{\bar\lambda/\bar\mu,f}(z|\bfb)^{\star}\cdot 
Q_{\mu}(x|b),
\]
and finally the claim follows from Proposition \ref{prop1}. 
\end{proof}
\section{Vexillary double Schubert polynomials of type C}\label{secSchPol}
\subsection{Double Schubert polynomials of type C}
In this section, we briefly recall the double Schubert polynomials of Ikeda--Mihalcea--Naruse. Please see \cite{IkedaMihalceaNaruse} for more detail. 

Let $W_{\infty}$ be the infinite hyperoctahedral group, {\it i.e.}, the Weyl group of type $C_{\infty}$ (or $B_{\infty}$). It is given as the group defined by generators (simple reflections) $\{s_i \ |\ i=0,1,2,\dots\}$ and relations
\[
s_i^2 = e \ (i\geq 0), \ \ s_1s_0s_1s_0=s_0s_1s_0s_1,  \ \ s_is_{i+1}s_i=s_{i+1}s_is_{i+1} (i\geq 1),\ \
s_is_j=s_js_i (|i-j|\geq 2),
\]
where $e$ is the identity element.
We identify $W_{\infty}$ with the group of {\it signed permutations}, {\it i.e.}, permutations $w$ of the set $\{1,2,\dots \} \cup \{-1,-2,\dots\}$ such that $w(i)\not= i$ for only finitely many $i$, and $\overline{w(i)}=w(\bar i)$ where we denote $\bar i = -i$. Each element of $W_{\infty}$, therefore, can be specified by the sequence $(w(1),w(2),\dots)$ which we call the one-line notation of $w$. The simple reflections are identified with the transpositions $s_0=(1,\bar 1)$ and $s_i=(i,i+1)(\bar i, \overline{i+1})$ for $i\geq 1$.

To each $w\in W_{\infty}$, Ikeda--Mihalcea--Naruse \cite{IkedaMihalceaNaruse}  associated a unique function $\frakC_w=\frakC_w(x;z|b)$ in the ring $\Gamma[z,b]$\footnote{Note that the parameters $t=(t_1,t_2,\dots)$ in \cite{IkedaMihalceaNaruse} are replaced by $-b=(-b_1,-b_2,\dots)$ in this paper.}. They are characterized by left and right divided difference operators $\delta_i$ and $\partial_i$ with $i=0,1,2,\dots$. Namely there is a unique family of elements $\frakC_w(x;z|b) \in \Gamma[z,b]$ ($w\in W_{\infty}$), satisfying
\[
\partial_i \frakC_w = \begin{cases}
\frakC_{ws_i} & \mbox{ if } \ell(ws_i)<\ell(w),\\
0 & \mbox{ otherwise},
\end{cases}
\ \ \ \ \ \ 
\delta_i \frakC_w = \begin{cases}
\frakC_{s_iw} & \mbox{ if } \ell(s_iw)<\ell(w),\\
0 & \mbox{ otherwise},
\end{cases}
\] 
for all $i=0,1,2,\dots,$ and such that $\frakC_w$ has no constant term except for $\frakC_e=1$.
\subsection{Vexillary signed permutations}
We follow Anderson--Fulton \cite{AndersonFultonVex}. A {\it triple} is a three $r$-tuples of positive integers, $\tau=(\bfk,\bfp,\bfq)$, with $\bfk=(0<k_1<\cdots<k_r)$,  $\bfp=(p_1\geq \cdots\geq p_r>0)$, and $\bfk=(q_1\geq \cdots\geq q_r>0)$, satisfying the inequality 
\[
(*) \ \ \ \ \ \ k_{i+1}-k_i \leq p_i - p_{i+1} + q_i - q_{i+1}  \ \ \ \ \ \ (1\leq i\leq r-1).
\]
A triple is essential if the inequality ($*$) is strict for all $i$. Each triple reduces to a unique essential triple by successively removing $(k_i,p_i,q_i)$ such that the equality holds in ($*$) and two triples are equivalent if they reduce to the same essential triple.

Anderson--Fulton explained how to construct a signed permutation $w=w(\tau)$ in \cite[\S 2]{AndersonFultonVex} and they define a signed permutation to be {\it vexillary} if it arises from a triple in such a way. Equivalent triples give the same vexillary signed permutation. An essential triple $\tau$ also determines a strict partition $\lambda(\tau)$ of length $r$, by setting $\lambda_{k_i}=p_i+q_i - 1$, and filling in the remaining $\lambda_{k}$ minimally so that $\lambda_1>\cdots >\lambda_r$. Similarly, we introduce a flag $f(\tau)=(f_1,\dots, f_r)$ associated to an essential triple $\tau$ by setting $f_{k_i}:=p_i-1$, and filling in the remaining $f_k$ minimally so that $f_1\geq \cdots \geq f_r$. In this way, we can assign a unique flagged strict partition to each vexillary signed permutation. Note that $m_i:=f_{k_i}$ is nothing but the labeling of $\lambda(\tau)$ given in \cite[\S 4]{AndersonFultonVex}. 

From the work of Anderson-Fulton \cite{AndersonFulton, AndersonFulton2}, it follows that the double Schubert polynomials associated to vexillary signed permutations can be given in the following   Pfaffian formula.
\begin{thm}[Anderson-Fulton \cite{AndersonFulton, AndersonFulton2}]
Let $w$ be a vexillary signed permutation and $(\lambda,f)$ the associated flagged strict partition. Then we have
\[
\frakC_w(x;z|b) = \Pf\left[q_{\lambda_1}^{[f_1|\lambda_1-f_1-1]}q_{\lambda_2}^{[f_2|\lambda_2-f_2-1]}\dots q_{\lambda_r}^{[f_r|\lambda_r-f_r-1]}\right].
\]
\end{thm}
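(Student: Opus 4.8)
The statement is attributed to Anderson--Fulton, so the plan is not to reprove it from the divided-difference characterization recalled above but to translate their Pfaffian formula for vexillary double Schubert polynomials \cite{AndersonFultonVex} into the $q$-function language fixed in \S\ref{secprelim}. First I would recall the precise form of their result: to the essential triple $\tau=(\bfk,\bfp,\bfq)$ attached to $w$ they associate a Schur--Pfaffian each of whose entries is governed by a power series assembled from the three factors $\prod_{j}(1+x_ju)/(1-x_ju)$, $\prod_{j=1}^{p_i-1}(1+z_ju)$ and $\prod_{j=1}^{q_i-1}(1+b_ju)$, after the substitution $b=-t$ noted in the footnote.

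The heart of the matter is the dictionary between $\tau$ and the flagged strict partition $(\lambda,f)$. For each essential index $k_i$ one has $\lambda_{k_i}=p_i+q_i-1$ and $f_{k_i}=p_i-1$, whence $\lambda_{k_i}-f_{k_i}-1=q_i-1$; comparing with the generating function that defines $q_m^{[k|\ell]}$ gives
\[
\sum_{m\geq 0} q_m^{[f_{k_i}|\lambda_{k_i}-f_{k_i}-1]} u^m
= \left(\prod_{j\geq 1}\frac{1+x_ju}{1-x_ju}\right)\prod_{j=1}^{p_i-1}(1+z_ju)\prod_{j=1}^{q_i-1}(1+b_ju),
\]
which is exactly Anderson--Fulton's series for that entry. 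Hence, entry by entry, their Pfaffian indexed by $\tau$ equals the Pfaffian built from the functions $q_{\lambda_{k_i}}^{[f_{k_i}|\lambda_{k_i}-f_{k_i}-1]}$.

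It then remains to match this with the length-$r$ Pfaffian in the statement, whose indices run over all of $1,\dots,r$ and therefore include the non-essential entries $q_{\lambda_k}^{[f_k|\lambda_k-f_k-1]}$ produced by the minimal filling of $\lambda(\tau)$ and $f(\tau)$. If Anderson--Fulton already phrase their formula with the full partition this is automatic; otherwise I would pass between the essential-length and the full-length Pfaffians using the formal properties collected in Lemma \ref{lem3} and Lemma \ref{lem4}, checking that at an index where equality holds in $(*)$ the minimal values of $\lambda_k$ and $f_k$ render the adjoined row and column redundant. I expect this reconciliation to be the main obstacle, since it is exactly where the ``minimal filling'' convention of \S\ref{secSchPol} must be shown to be the one under which the enlarged Schur--Pfaffian agrees with the essential one. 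Once the indexing and the generating functions are aligned---taking care of the substitution $b=-t$ and of the sign conventions in $e^{[\ell]}_u(b)$---the two Pfaffians coincide term by term and the theorem follows.
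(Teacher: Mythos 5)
Your proposal is correct and is essentially the paper's own approach: the paper offers no independent proof of this theorem, deriving it directly from Anderson--Fulton's Pfaffian formula via exactly the dictionary you describe ($\lambda_{k_i}=p_i+q_i-1$, $f_{k_i}=p_i-1$, so the entry series is $q_u(x)\,e_u^{[p_i-1]}(z)\,e_u^{[q_i-1]}(b)$, matching $q_m^{[f_{k_i}|\lambda_{k_i}-f_{k_i}-1]}$). The reconciliation you flag as the main obstacle is automatic, since Anderson--Fulton state their formula for the full partition with a labeling attached to every row, which the paper identifies with the flag via $m_i=f_{k_i}$ in \S\ref{secSchPol}.
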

Since, by construction, the flagged strict partition $(\lambda,f)$ associated to a vexillary signed permutation $w$ satisfies the requirement in  Theorem \ref{mainthm}, we obtain the following theorem.
\begin{thm}\label{thmmain2}
Let $w$ be a vexillary signed permutation and $(\lambda,f)$ the associated flagged strict partition. Then we have $\frakC_w(x;z|b) = Q_{\lambda,f}(x;z|b)$.
\end{thm}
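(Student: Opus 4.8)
The plan is to deduce Theorem \ref{thmmain2} directly by matching the two Pfaffian expressions. By Theorem \ref{mainthm}, for any flagged strict partition $(\lambda,f)$ of length $r$ satisfying the hypotheses $(\mathrm a)$ $\lambda_i-f_i\geq \lambda_j-f_j$ for all $i<j$ and $(\mathrm b)$ $\lambda_{r-1}-f_{r-1}>0$, the combinatorially defined function $Q_{\lambda,f}(x;z|b)$ equals the Schur--Pfaffian $\Pf\left[q_{\lambda_1}^{[f_1|\lambda_1-f_1-1]}\cdots q_{\lambda_r}^{[f_r|\lambda_r-f_r-1]}\right]$. On the other hand, the cited theorem of Anderson--Fulton expresses the double Schubert polynomial $\frakC_w(x;z|b)$ of a vexillary signed permutation as exactly the same Pfaffian, once $(\lambda,f)$ is the flagged strict partition built from an essential triple in \S\ref{secSchPol}. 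So the entire content of the proof is the verification that the flagged strict partition arising from a vexillary $w$ does satisfy the hypotheses of Theorem \ref{mainthm}; then the two right-hand sides are literally identical and the theorem follows.

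First I would unwind the construction of $(\lambda,f)=(\lambda(\tau),f(\tau))$ from the associated essential triple $\tau=(\bfk,\bfp,\bfq)$. Recall that $\lambda_{k_i}=p_i+q_i-1$ and $f_{k_i}=p_i-1$, so that $\lambda_{k_i}-f_{k_i}=q_i$, and that the remaining entries are filled in minimally subject to $\lambda$ being strictly decreasing and $f$ weakly decreasing. To check hypothesis $(\mathrm a)$, i.e.\ that the sequence $\lambda_i-f_i$ is weakly decreasing in $i$, I would argue on two types of indices: at the distinguished positions $k_i$ we have $\lambda_{k_i}-f_{k_i}=q_i$, and since $q_1\geq\cdots\geq q_r>0$ these values are already weakly decreasing; for the intermediate indices, the minimal filling of $\lambda$ (strict) and of $f$ (weakly decreasing, and in fact constant between consecutive $k_i$ by the minimality rule $f_k=f_{k_i}$) forces $\lambda_k-f_k$ to decrease by exactly one as $k$ increases through a block, so the differences interpolate monotonically between the boundary values $q_i$ and $q_{i+1}$. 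The triple inequality $(*)$, $k_{i+1}-k_i\le (p_i-p_{i+1})+(q_i-q_{i+1})$, is precisely what guarantees that these interpolated differences remain nonincreasing across block boundaries and that the minimal strict filling of $\lambda$ is consistent with the weakly decreasing filling of $f$; I would make this compatibility explicit. Hypothesis $(\mathrm b)$, $\lambda_{r-1}-f_{r-1}>0$, follows since the $\lambda_i-f_i$ are positive at every distinguished index (each $q_i>0$) and the interpolation keeps them positive down to index $r$, so in particular at $r-1$.

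The step I expect to be the genuine obstacle is the careful bookkeeping of the minimal filling rule and its interaction with $(*)$, since the clean identities only hold at the distinguished indices $k_i$ and one must confirm that nothing degenerates in between; in particular one must rule out the possibility that $\lambda_i-f_i$ jumps up somewhere, which is exactly where the essential-triple inequality is used. Everything else is formal: once $(\mathrm a)$ and $(\mathrm b)$ are in hand, Theorem \ref{mainthm} applies verbatim to $(\lambda,f)$, the Anderson--Fulton theorem supplies the identical Pfaffian for $\frakC_w$, and so $\frakC_w(x;z|b)=Q_{\lambda,f}(x;z|b)$. I would close by noting that this chain of equalities requires no new computation beyond the cited results, the whole novelty lying in the combinatorial translation $Q_{\lambda,f}=\Pf[\cdots]$ established in Theorem \ref{mainthm}.
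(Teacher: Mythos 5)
Your overall strategy is exactly the paper's: Anderson--Fulton's theorem expresses $\frakC_w(x;z|b)$ as the Pfaffian $\Pf\left[q_{\lambda_1}^{[f_1|\lambda_1-f_1-1]}\cdots q_{\lambda_r}^{[f_r|\lambda_r-f_r-1]}\right]$, Theorem \ref{mainthm} identifies that same Pfaffian with $Q_{\lambda,f}(x;z|b)$, and the only thing left is to check that the flagged strict partition of a vexillary $w$ satisfies hypotheses $(\mathrm a)$ and $(\mathrm b)$ --- a point the paper disposes of with the words ``by construction.'' So you located the real content correctly.

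However, your verification of $(\mathrm a)$ is wrong. You fill the intermediate flags by the minimal weakly decreasing rule, i.e.\ constant on each block $k_i<k<k_{i+1}$ equal to $f_{k_{i+1}}=p_{i+1}-1$, and you claim the triple inequality $(*)$ forces $\lambda_k-f_k$ to be nonincreasing across block boundaries. With that filling the boundary condition at $k_i$ reads $q_i\geq q_{i+1}+(k_{i+1}-k_i-1)$, i.e.\ $k_{i+1}-k_i\leq (q_i-q_{i+1})+1$, whereas $(*)$ only gives $k_{i+1}-k_i\leq (p_i-p_{i+1})+(q_i-q_{i+1})$; the term $p_i-p_{i+1}$ can be arbitrarily large, so the implication fails. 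Concretely, take $\bfk=(1,4)$, $\bfp=(5,1)$, $\bfq=(2,1)$: this is an essential triple (since $3<5$), it gives $\lambda=(6,3,2,1)$, your flag is $f=(4,0,0,0)$, and $\lambda-f=(2,3,2,1)$ is not weakly decreasing, so Theorem \ref{mainthm} does not apply. What you are missing is that the intermediate flags must be chosen so that \emph{both} $f$ and $\lambda-f$ are weakly decreasing --- equivalently, writing $\lambda_k=p_k+q_k-1$, both interpolated sequences $(p_k)$ and $(q_k)$ are weakly decreasing. This is Anderson--Fulton's labeling, which is what the paper (and their Pfaffian formula, whose entries depend on $f$) actually uses; such an interpolation exists precisely because of $(*)$, e.g.\ going up through a block one first raises the $q$-coordinate until it reaches $q_i$ and only then raises the $p$-coordinate. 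In the example above this gives $f=(4,1,0,0)$, with $\lambda-f=(2,2,2,1)$. Once the construction is pinned down this way, $(\mathrm a)$ holds by fiat, $(\mathrm b)$ follows since $\lambda_k-f_k\geq q_r>0$ for every row $k$, and your concluding chain of equalities becomes exactly the paper's proof.
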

\subsection{A new tableau formula of Ivanov's factorial $Q$ functions}
A signed permutation $w$ is {\it Lagrangian} if $w(1)<w(2)<\cdots<w(r)<0<w(r+1)<\cdots$ for some integer $r\geq 1$. A Lagrangian signed permutation is vexillary. Indeed, we can define a triple $\tau$ from which $w$ is constructed by setting $k_i=i,  p_i=1$, and $q_i= \overline{w(i)}$ for $i=1,\dots, r$. The associated flagged strict partition $(\lambda,f)$ is given by $\lambda_i=\overline{w(i)}$ and $f_i=0$ for $i=1,\dots, r$. 

On other hand, if $w$ is vexillary, then $w^{-1}$ is also vexillary. In fact, Anderson--Fulton  showed that for a triple $\tau=(\bfk,\bfp,\bfq)$, we have $w(\tau)^{-1} = w(\tau^*)$ where $\tau^* = (\bfk,\bfq,\bfp)$ (\cite[Lemma 2.3]{AndersonFultonVex}). From this, we can deduce that if $w$ is Lagrangian with the strict partition $\lambda$ of length $r$ (and the flag $(0,\dots,0)$, then $w^{-1}$ is a vexillary signed permutation with the strict partition $\lambda$ and the flag $f=(\lambda_1-1,\dots,\lambda_r-1)$.

It is known (\cite{Kazarian}, \cite{Ikeda2007}) that for a Lagrangian signed permutation $w$ with the associated strict partition $\lambda$, we have $\frakC_w(x;z |b) = Q_{\lambda}(x|b)$. On the other hand, by \cite[Theorem 8.1 (3) ]{IkedaMihalceaNaruse}, we know that for a signed permutation $w$, we have $\frakC_{w}(x;z|b)=\frakC_{w^{-1}}(x;b|z)$, which, by Theorem \ref{thmmain2}, implies that $\frakC_w(x;z|b) = Q_{(\lambda,f)}(x;b|z)$, where $f=(\lambda_1-1,\dots,\lambda_r-1)$. Thus we can conclude that $Q_{\lambda}(x|b) = Q_{(\lambda,f)}(x;b|z)$, which shows that the $z$-variables the right hand side. Now by applying Theorem \ref{mainthm}, we obtain the following theorem.
\begin{thm}\label{thmmain3}
Let $\lambda=(\lambda_1,\dots,\lambda_r)$ be a strict partition of length $r$, and $f=(\lambda_1-1,\dots, \lambda_r-1)$. Then Ivanov's factorial $Q$ function associated to $\lambda$ can be expressed as 
\[
Q_{\lambda}(x|b) = \sum_{T\in \MST(\lambda,f)} (xb)^T, \ \ \ \ \ 
(xb)^T = \prod_{k\in T} x_k\prod_{k'\in T} x_k\prod_{k^{\circ}\in T} b_k.
\]
\end{thm}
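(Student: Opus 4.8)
The plan is to deduce Theorem~\ref{thmmain3} from the chain of identities relating Ivanov's function $Q_\lambda(x|b)$ to the flagged factorial $Q$-function $Q_{\lambda,f}(x;z|b)$ for the flag $f=(\lambda_1-1,\dots,\lambda_r-1)$, and then to specialize the auxiliary variables so that each tableau weight collapses to the monomial $(xb)^T$. All the analytic ingredients are already available: the Kazarian--Ikeda identification $\frakC_w(x;z|b)=Q_\lambda(x|b)$ for Lagrangian $w$, the Ikeda--Mihalcea--Naruse symmetry $\frakC_w(x;z|b)=\frakC_{w^{-1}}(x;b|z)$, and Theorem~\ref{thmmain2}.

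First I would fix a Lagrangian signed permutation $w$ whose associated strict partition is $\lambda$ (with trivial flag). As recalled above, $w^{-1}$ is then vexillary with the same strict partition $\lambda$ and flag $f=(\lambda_1-1,\dots,\lambda_r-1)$; in particular $\lambda_i-f_i=1$ for every $i$, so $(\lambda,f)$ meets hypotheses (a) and (b) of Theorem~\ref{mainthm}. Chaining the three cited facts gives the key identity
\[
Q_\lambda(x|b)=\frakC_w(x;z|b)=\frakC_{w^{-1}}(x;b|z)=Q_{\lambda,f}(x;b|z),
\]
where the last equality is Theorem~\ref{thmmain2} applied to $w^{-1}$ with the roles of $z$ and $b$ interchanged. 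Since this is an identity in $\Gamma[z,b]$, it may be specialized freely in the $z$-variables.

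The second step is that specialization. Because the left-hand side $Q_\lambda(x|b)$ carries no $z$-variable, the right-hand side $Q_{\lambda,f}(x;b|z)$ must be independent of $z$ as well; alternatively this follows directly from Theorem~\ref{mainthm}, whose Pfaffian entries for this flag are $q_{\lambda_i}^{[\lambda_i-1\,|\,0]}$ and hence involve no third-slot variable. I would therefore set $z=0$. Expanding $Q_{\lambda,f}(x;b|z)$ as the sum over $T\in\MST(\lambda,f)$ of the weight obtained from $(xz|b)^T$ by placing $b$ in the second slot and $z$ in the third, namely
\[
\prod_{k\in T}\bigl(x_k+z_{c(k)-r(k)}\bigr)\cdot\prod_{k'\in T}\bigl(x_k-z_{c(k')-r(k')}\bigr)\cdot\prod_{k^{\circ}\in T}\bigl(b_k+z_{k+r(k^{\circ})-c(k^{\circ})}\bigr),
\]
and then putting every $z_j=0$ (so that $z_{-i}=-z_{i+1}=0$ as well under the convention $\star$), each of the three products collapses to $\prod_k x_k$, $\prod_{k'} x_k$, and $\prod_{k^{\circ}} b_k$. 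The weight thus becomes exactly $(xb)^T$, and summing over $T$ yields the desired formula $Q_\lambda(x|b)=\sum_{T\in\MST(\lambda,f)}(xb)^T$.

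The only delicate point is the bookkeeping of the interchange $z\leftrightarrow b$ together with the sign convention $b_{-i}=-b_{i+1}$: one must verify that swapping the second and third arguments of $Q_{\lambda,f}$ and then specializing the new third argument to zero produces precisely the three monomial products and leaves no residual dependence in the circled factors. This is a routine check rather than a genuine obstacle; the substantive work lies entirely in Theorem~\ref{mainthm} and Theorem~\ref{thmmain2}, and the present argument merely records the $z=0$ specialization of the identity $Q_\lambda(x|b)=Q_{\lambda,f}(x;b|z)$.
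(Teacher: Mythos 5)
Your proposal is correct and takes essentially the same approach as the paper: the chain $Q_{\lambda}(x|b)=\frakC_w(x;z|b)=\frakC_{w^{-1}}(x;b|z)=Q_{\lambda,f}(x;b|z)$ (Kazarian--Ikeda, the Ikeda--Mihalcea--Naruse symmetry, and Theorem \ref{thmmain2}), followed by observing that the right-hand side is independent of $z$ and specializing $z=0$ so that each tableau weight collapses to $(xb)^T$. This is exactly the paper's argument; your explicit verification of hypotheses (a) and (b) of Theorem \ref{mainthm} and of the $z=0$ collapse under the $\star$-convention simply spells out bookkeeping that the paper leaves implicit.
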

\begin{rem}
From Theorem \ref{thmmain3} and Proposition \ref{prop1}, we can also write
\[
Q_{\lambda}(x|b)  = \sum_{\mu \in \SP \atop{\mu\subset\lambda \atop{\bar\mu\in \calP}}} Q_{\mu}(x)\cdot \widetilde{s}_{\bar\lambda/\bar\mu, f}(b),
\] 
for a strict partition $\lambda$ of length $r$ where $f=(\lambda_1-1,\dots,\lambda_r-1)$. In the view of Theorem \ref{thm:app1}, this recovers \cite[Theorem 10.2]{Ivanov04}.
\end{rem}
\section{Appendix: Lattice path method for row-strict Schur polynomials}\label{app1}
In this section, we prove a Jacobi--Trudi type formula (Theorem \ref{thm:app1} below) for the row-strict flagged skew factorial Schur polynomials defined at Definition \ref{df: row Schur}. It is a factorial generalization of Theorem 3.5$^*$ in \cite{Wachs}. We prove it by interpreting the tableaux as lattice paths and applying \cite[Theorem 1.2]{StembridgePf} (cf. \cite{Lindstrom, GesselViennot, GesselViennot2}). 

First we recall the basic notations from \cite{StembridgePf}. Let $D=(V,E)$ be an acyclic oriented graph without multiple edges: $V$ is the set of vertices and $E$ is the set of edges in $D$.  For vertices $u$ and $v$, a path from $u$ to $v$ is a sequence of edges $e_1,\dots, e_m$ such that the source of $e_1$ is $u$, the target of $e_m$ is $v$, and the target of $e_i$ coincides with the source of $e_{i+1}$ for all $i=1,\dots, m-1$. Let $\scP(u,v)$ be the set of all paths from $u$ to $v$. Let $w: E \to R$ be a weight function where $R$ is some commutative ring.  For a path $P$, we also denote $w(P)$ the product of the weights of all edges in $P$. Let 
\[
\GF\left[\scP(u,v)\right] = \displaystyle\sum_{P\in \scP(u,v) } w(P).
\] 
Let $\bfu=(u_1,\dots, u_r)$ and $\bfv=(v_1,\dots, v_r)$ be ordered sets of vertices of $D$. Let $\scP_0(\bfu,\bfv)$ is the set of all non-intersecting $r$-tuples of paths, $\bfP=(P_1,\dots, P_r)$, with $P_i\in \scP(u_i,v_i)$. We denote 
\[
\GF\left[\scP_0(\bfu,\bfv)\right] = \displaystyle\sum_{\bfP \in \scP_0(\bfu,\bfv)} w(\bfP)
\]
where we set $w(\bfP) = w(P_1)w(P_2)\cdots w(P_r)$. Finally, we say that $\bfu$ is $D$-compatible with $\bfv$ if a path $P \in \scP(u_i,v_j)$ intersects with a path $Q\in \scP(u_k,v_l)$ whenever $i<k$ and $j>l$. 
\begin{thm}[Theorem 1.2, \cite{StembridgePf}]\label{appAthm}
Let $\bfu=(u_1,\dots, u_r)$ and $\bfv=(v_1,\dots,v_r)$ be ordered sets of vertices such that $\bfu$ is $D$-compatible with $\bfv$.  Then
\[
\GF\left[\scP_0(\bfu,\bfv)\right] = \det \left(\GF\left[\scP(u_i,v_j)\right]\right)_{1\leq i,j\leq r}.
\]
\end{thm}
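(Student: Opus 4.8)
The plan is to establish this by the standard Lindström--Gessel--Viennot sign-reversing involution, adapted to the acyclic graph $D$. First I would expand the right-hand determinant by its definition as a signed sum over the symmetric group:
\[
\det\left(\GF\left[\scP(u_i,v_j)\right]\right)_{1\le i,j\le r} = \sum_{\sigma\in S_r}\sgn(\sigma)\prod_{i=1}^r \GF\left[\scP(u_i,v_{\sigma(i)})\right].
\]
Expanding each factor $\GF[\scP(u_i,v_{\sigma(i)})]$ into its defining sum over paths, the right-hand side becomes a signed sum over all pairs $(\sigma,\bfP)$, where $\sigma\in S_r$ and $\bfP=(P_1,\dots,P_r)$ is an $r$-tuple of paths with $P_i\in\scP(u_i,v_{\sigma(i)})$, each such pair contributing $\sgn(\sigma)\,w(\bfP)$, where $w(\bfP)=w(P_1)\cdots w(P_r)$ is the path weight.

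Next I would split these pairs into those whose paths are mutually non-intersecting and those that are not, and build a weight-preserving, sign-reversing involution on the intersecting families. Given an intersecting $(\sigma,\bfP)$, I would fix a canonical rule to locate a distinguished meeting of two paths --- for instance, take the least $i$ for which $P_i$ shares a vertex with some other path, then the least $j$ such that $P_i$ and $P_j$ meet (necessarily $j>i$ by minimality of $i$), and the earliest common vertex $x$ along $P_i$ --- and swap the portions of $P_i$ and $P_j$ that follow $x$. The resulting tuple $\bfP'$ reuses exactly the same multiset of edges, so $w(\bfP')=w(\bfP)$, while the endpoints of the two affected paths are exchanged, so the new family realizes the permutation $\sigma'=\sigma\cdot(i\,j)$ with $\sgn(\sigma')=-\sgn(\sigma)$.

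The step I expect to require the most care is verifying that this map is a genuine involution: because $D$ is acyclic, the vertex $x$ lies on both new paths, and one must check that the canonical selection rule returns the same indices $i,j$ and the same vertex $x$ when applied to $(\sigma',\bfP')$, so that a second application restores $(\sigma,\bfP)$. The point is that the prefix of $P_i$ up to $x$ is unchanged by the swap, so $i$ remains minimal and $x$ remains the earliest meeting vertex; consistent tie-breaking then recovers $j$. Granting this, the intersecting families cancel in signed pairs. Finally I would invoke $D$-compatibility to pin down the survivors: if a family $(\sigma,\bfP)$ is non-intersecting and $\sigma\ne\id$, then $\sigma$ has an inversion $i<k$ with $\sigma(i)>\sigma(k)$, whence compatibility forces $P_i\in\scP(u_i,v_{\sigma(i)})$ and $P_k\in\scP(u_k,v_{\sigma(k)})$ to intersect, a contradiction. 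Hence the only surviving terms are the non-intersecting families with $\sigma=\id$, each contributing $+w(\bfP)$, and their total is precisely $\GF[\scP_0(\bfu,\bfv)]$, which is the claimed identity.
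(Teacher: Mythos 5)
Your overall strategy --- expanding the determinant, cancelling the intersecting families by a weight-preserving, sign-reversing tail swap, and then invoking $D$-compatibility to eliminate non-intersecting families with $\sigma\neq\mathrm{id}$ --- is the standard Lindstr\"om--Gessel--Viennot argument; note that the paper itself gives no proof of this statement but imports it from Stembridge, whose proof is this same argument. However, your canonical selection rule has a genuine flaw, and it occurs precisely at the step you flagged as delicate. You choose $j$ \emph{before} $x$: first the least $j$ such that $P_i$ meets $P_j$, then the earliest vertex $x$ of $P_i$ lying on $P_j$. This map is not an involution. Counterexample: take three paths in which $P_1$ meets $P_3$ at a vertex $x$, $P_2$ meets $P_3$ at a vertex $w$ lying strictly after $x$ along $P_3$, and $P_1\cap P_2=\varnothing$. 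Your rule selects $(i,j)=(1,3)$ and swaps tails at $x$; but the new path $P_1'$ now contains $w$, hence meets $P_2$, so re-applying your rule selects $(i,j)=(1,2)$ and swaps at $w$, which does not restore the original family. The false assertion in your verification is that ``consistent tie-breaking then recovers $j$'': the tail grafted from $P_j$ onto $P_i$ can create intersections of $P_i'$ with paths of index \emph{smaller} than $j$, so the minimality defining $j$ is not preserved by the swap.

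The repair is standard and costs nothing: choose $x$ \emph{before} $j$. Let $i$ be minimal such that $P_i$ meets another path, let $x$ be the \emph{first} vertex along $P_i$ lying on any other path, and let $j$ be the least index $\neq i$ among paths passing through $x$; then swap the tails after $x$. With this ordering the verification goes through: the portion of $P_i$ strictly before $x$ lies on no other path, and this persists after the swap, since the only modified path besides $P_i$ is $P_j$, whose new version consists of the old prefix of $P_j$ (which cannot meet $P_i$ before $x$, by the choice of $x$) and the old tail of $P_i$ (which cannot meet the prefix of $P_i$ by acyclicity of $D$, as a path in an acyclic digraph never repeats a vertex). Moreover the set of indices of paths passing through $x$ is unchanged by the swap, so the same triple $(i,x,j)$ is selected on the second application and the map is an involution. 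With this modification, the rest of your argument --- weight preservation via equality of edge multisets, $\sgn(\sigma\cdot(i\,j))=-\sgn(\sigma)$, and the final use of $D$-compatibility to rule out non-intersecting families with $\sigma\neq\mathrm{id}$ --- is correct as written.
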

In order to apply Theorem \ref{appAthm} to the row-strict flagged Schur polynomials, we introduce an acyclic directed graph $D$ as follows: its vertex set $V$ is $\ZZ\times \ZZ_{\geq 0}$ and there is an edge $(u,v) \in E$ from the source $u$ to the target $v$ if $u-v$ is $(0,1)$ or $(1,1)$. We call an edge $(u,v)$ {\it diagonal} if $u-v=(1,1)$, and {\it vertical} if $u-v=(0,1)$. 

We define a weight function $w: E \to \ZZ[z,\bfb]$ by setting $w(e) = 1$ if $e$ is horizontal and $w(e)=z_t + b_{t-s}$ if $e$ is a diagonal edge with its source at $(s,t)$.

Let $\lambda/\mu$ is a skew (unshifted) diagram of length at most $r$ and $f$ its flag. Consider the ordered sets of vertices $\bfu=(u_1,\dots, u_r)$ and $\bfv=(v_1,\dots, v_r)$ where 
\[
u_i=(\lambda_i-i, f_i), \ \ \ v_i=(\mu_i-i,0).
\]
There is a bijection between $\SST^*(\lambda/\mu,f)$ and $\scP_0(\bfu,\bfv)$ defined as follows. Let $T \in \SST^*(\lambda/\mu,f)$. Let $\bfP=(P_1,\dots, P_r)$ be the corresponding $r$-tuple of paths defined as follows. If $j_m<\cdots<j_1$ are the entries of $i$-th row of $T$  where $m=\lambda_i-\mu_i$, then we define $P_i$ to be  the unique path from $u_i$ to $v_i$ such that the $k$-th diagonal edge has its source at $(\lambda_i-i-k+1, j_k)$ for $k=1,\dots,m$.
For example, let $\lambda=(3,2,1)$, $\mu=(1,1,0)$ and $f=(3,2,1)$. The following is an example of a tableaux $T$ in $\SST^*(\lambda/\mu,f)$ and the corresponding triple of non-intersecting paths.
\setlength{\unitlength}{0.6mm}
\begin{center}
\begin{picture}(90,60)

\dottedline{2}(00,40)(30,40)
\dottedline{2}(00,30)(30,30)
\dottedline{2}(00,20)(20,20)
\dottedline{2}(00,10)(10,10)

\dottedline{2}(00,40)(00,10)
\dottedline{2}(10,40)(10,10)
\dottedline{2}(20,40)(20,20)
\dottedline{2}(30,40)(30,30)

\linethickness{0.3mm}
\put(10,40){\line(1,0){20}}
\put(10,30){\line(1,0){20}}
\put(00,20){\line(1,0){20}}
\put(00,10){\line(1,0){10}}
\put(00,20){\line(0,-1){10}}
\put(10,40){\line(0,-1){30}}
\put(20,40){\line(0,-1){20}}
\put(30,40){\line(0,-1){10}}

\put(14,33){{\small $2$}}\put(24,33){{\small $3$}}
\put(14,23){{\small $2$}}
\put(04,13){{\small $1$}}

\put(10,45){{\small $\lambda/\mu$}}
\put(-14,25){{\small $T$}}

\put(44,45){{\small $f$}}
\put(44,33){{\small $3$}}
\put(44,23){{\small $2$}}
\put(44,13){{\small $1$}}
\end{picture}
\ \ \ 
\begin{picture}(50,60)
\put(-2,53){{\footnotesize $v_3$}}
\put(-1,49){{\footnotesize $\bullet$}}
\put(18,53){{\footnotesize $v_2$}}
\put(19,49){{\footnotesize $\bullet$}}
\put(28,53){{\footnotesize $v_1$}}
\put(29,49){{\footnotesize $\bullet$}}

\put(8,35){{\footnotesize $u_3$}}
\put(9,39){{\footnotesize $\bullet$}}
\put(28,25){{\footnotesize $u_2$}}
\put(29,29){{\footnotesize $\bullet$}}
\put(48,15){{\footnotesize $u_1$}}
\put(49,19){{\footnotesize $\bullet$}}

\put(65,48){{\footnotesize $0$}}
\put(65,38){{\footnotesize $1$}}
\put(65,28){{\footnotesize $2$}}
\put(65,18){{\footnotesize $3$}}
\put(65,08){{\footnotesize $4$}}
\put(65,-2){{\footnotesize $5$}}

\put(-1,-8){{\footnotesize $-3$}}
\put(09,-8){{\footnotesize $-2$}}
\put(19,-8){{\footnotesize $-1$}}
\put(29,-8){{\footnotesize $0$}}
\put(39,-8){{\footnotesize $1$}}
\put(49,-8){{\footnotesize $2$}}
\put(59,-8){{\footnotesize $3$}}

\dottedline{2}(00,10)(10,00)
\dottedline{2}(00,20)(20,00)
\dottedline{2}(00,30)(30,00)
\dottedline{2}(00,40)(40,00)
\dottedline{2}(00,50)(50,00)
\dottedline{2}(10,50)(60,00)
\dottedline{2}(20,50)(60,10)
\dottedline{2}(30,50)(60,20)
\dottedline{2}(40,50)(60,30)
\dottedline{2}(50,50)(60,40)

\dottedline{2}(00,00)(00,50)
\dottedline{2}(10,00)(10,50)
\dottedline{2}(20,00)(20,50)
\dottedline{2}(30,00)(30,50)
\dottedline{2}(40,00)(40,50)
\dottedline{2}(50,00)(50,50)
\dottedline{2}(60,00)(60,50)

\linethickness{0.2mm}

\put(20,50){\line(0,-1){10}}
\put(30,50){\line(0,-1){10}}

\put(00,50){\line(1,-1){10}}
\put(20,40){\line(1,-1){10}}
\put(30,40){\line(1,-1){10}}
\put(40,30){\line(1,-1){10}}

\end{picture}
\vspace{5mm}

\end{center}
It is not difficult to see that this defines a bijection from $\SST^*(\lambda/\mu,f)$ to $\scP_0(\bfu,\bfv)$. Moreover, this bijection preserves the weights. Namely, suppose that $T$ corresponds to $\bfP$. Let $j_m < \cdots < j_1$ be the entries of the $i$-th row of $T$. The column index of the entry $j_k$ is $\lambda_i-k+1$ and thus its corresponding weight is $z_{j_k} + b_{j_k + i - (\lambda_i-k+1)}$. On the other hand, $P_i$'s $k$-th diagonal edge has its sources at $(\lambda_i-i-k+1, j_k)$ and thus its weight is also $
z_{j_k} + b_{j_k + i  - (\lambda_i-k+1)}$. For example, the weights of the above examples of a tableau and the corresponding paths are both $(x_2 + b_1)(x_3 + b_1)\cdot (x_2 + b_2) \cdot (x_1 + b_3)$.
Thus we have
\begin{equation}\label{appeq1}
\widetilde{s}_{\lambda/\mu,f}(z|\bfb) = \sum_{T\in \SST^*(\lambda/\mu,f)} (z|\bfb)^T
= \GF\left[\scP_0(\bfu,\bfv)\right].
\end{equation}
The following is an extension of Lemma \ref{lem1} in the view of the lattice path interpretation and will be used in the proof of Theorem \ref{thm:app1} below.
\begin{lem}\label{lemApp1}
Let $u=(s-1,f)$ and $v=(t-1,0)$ where $s,t\in \ZZ$ and $f\in \ZZ_{\leq 0}$, then  we have
\[
\GF\left[\scP(u,v)\right] = e_{s-t}^{[f|s-t-f-1]}(z|\tau^{-t}b).
\]
In particular, this identity is trivially zero unless $0\leq s-t \leq f$.
\end{lem}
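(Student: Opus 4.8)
The plan is to reduce the statement to the already-established one-row case (Lemma \ref{lem1}, via the bijection (\ref{appeq1})) by exploiting the horizontal translation symmetry of the graph $D$; only the combinatorial analysis of a single path and the bookkeeping of the shift operator $\tau$ require attention.

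First I would describe the paths in $\scP(u,v)$. Every edge of $D$ lowers the second coordinate by exactly $1$, so (recalling $u\in V=\ZZ\times\ZZ_{\ge 0}$, whence $f\ge 0$) a path from $u=(s-1,f)$ to $v=(t-1,0)$ consists of exactly $f$ edges. Only the diagonal edges change the first coordinate, each by $-1$, while the total horizontal displacement is $(s-1)-(t-1)=s-t$; hence such a path has precisely $s-t$ diagonal edges and $f-(s-t)$ vertical ones. In particular $\scP(u,v)=\varnothing$ unless $0\le s-t\le f$, which yields the final assertion of the lemma, and from now on I may assume $0\le s-t\le f$.

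The key step is a translation identity. For $c\in\ZZ$ let $\phi_c\colon (x,y)\mapsto(x+c,y)$ denote horizontal shift; it is a bijection of $V$ that preserves edges together with their type, hence identifies $\scP(u,v)$ with $\scP(\phi_c u,\phi_c v)$. A diagonal edge with source $(\sigma,\theta)$ has weight $z_\theta+b_{\theta-\sigma}$, whereas its image has source $(\sigma+c,\theta)$ and weight $z_\theta+b_{\theta-\sigma-c}=z_\theta+(\tau^{-c}b)_{\theta-\sigma}$; vertical edges keep weight $1$. Since $\bfb=(b_i)_{i\in\ZZ}$ is indexed over all of $\ZZ$, this gives the clean identity
\[
\GF\left[\scP(\phi_c u,\phi_c v)\right](z|b)=\GF\left[\scP(u,v)\right](z|\tau^{-c}b).
\]

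Finally I would take $c=-t$, so that $\phi_{-t}u=((s-t)-1,f)$ and $\phi_{-t}v=(-1,0)$. Under the standing assumption $0\le s-t\le f$ these are exactly the vertices attached to the one-row flagged shape $(s-t)/(0)$ with flag $f$, so (\ref{appeq1}) and Lemma \ref{lem1} give $\GF[\scP(\phi_{-t}u,\phi_{-t}v)]=e_{s-t}^{[f|s-t-f-1]}(z|b)$. Feeding this into the translation identity yields $\GF[\scP(u,v)](z|\tau^{t}b)=e_{s-t}^{[f|s-t-f-1]}(z|b)$, and substituting $b\mapsto\tau^{-t}b$ (using $\tau^{t}\tau^{-t}=\id$) produces the desired $\GF[\scP(u,v)](z|b)=e_{s-t}^{[f|s-t-f-1]}(z|\tau^{-t}b)$. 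The only real obstacle is keeping the signs of the $\tau$-exponents straight through the shift and the final resubstitution; the degenerate case $s=t$ (a single all-vertical path of weight $1$, matching the constant term of the right-hand side) should be checked but is immediate.
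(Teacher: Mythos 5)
Your proof is correct and takes essentially the same route as the paper's: both reduce the claim to the one-row computation of Lemma \ref{lem1}, via the tableau--path bijection (\ref{appeq1}) together with horizontal-translation invariance of the path generating function, with the shift bookkept by $\tau$. The only (cosmetic) difference is that the paper treats $t\geq 0$ directly, reading off the vertices of the skew one-row shape $(s)/(t)$, and translates only in the case $t<0$ (shifting right by some $n$ with $t+n\geq 0$ and then shifting back), whereas you translate uniformly by $c=-t$ down to the straight shape $(s-t)/(0)$.

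One small omission: outside the range $0\leq s-t\leq f$, the emptiness of $\scP(u,v)$ gives only the vanishing of the \emph{left-hand} side, whereas the lemma asserts the identity there (both sides zero). You still need to check that $e_{s-t}^{[f|s-t-f-1]}(z|\tau^{-t}b)=0$: this is immediate for $s-t<0$, and for $s-t>f\geq 0$ it holds because $e_u^{[f|s-t-f-1]}$ is then a polynomial of degree $f+(s-t-f-1)=s-t-1<s-t$ in the formal variable $u$. This is exactly the degree argument the paper's proof includes, and without it your reduction only proves the identity in the non-degenerate range.
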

\begin{proof}
If $s-t<0$, clearly the identity is zero. If $0 \leq f < s-t$, then $\scP(u,v)=\varnothing$ so that $\GF\left[\scP(u,v)\right]=0$. Furthermore, $e_u^{[f|s-t-f-1]}$ is a polynomial in $u$ of degree $s-t-1$ so that $e_{s-t}^{[f|s-t-f-1]}=0$. Below we suppose that $0\leq s-t \leq f$. 

If $t\geq 0$, the claim follows from Lemma \ref{lem1}.
If $t<0$,  consider $u'=(s-1+n, f)$ and $v'=(t-1+n,0)$ for some $n$ such that $t+n\geq 0$, and then we have, also by Lemma \ref{lem1}, 
\[
\GF\left[\scP(u',v')\right] =  e_{s-t}^{[f|s-t-f-1]}(z|\tau^{-t-n}b).
\]
Since the paths in $\scP(u,v)$ are obtained from the paths in $\scP(u',v')$ by shifting horizontally to the left by $n$ units, we obtain $\GF\left[\scP(u,v)\right]$ from $\GF\left[\scP(u',v')\right]$ by adding $n$ to all indices of $b$ variables. Thus the claim follows.
\end{proof}
\begin{thm}\label{thm:app1}
Let  $(\lambda/\mu,f)$ be a flagged skew partition where $\lambda$ is a partition of length $r$. Assume that $\lambda_i-i-f_i\geq \lambda_j-j-f_j$ for all $i<j$. Then we have
\[
\widetilde{s}_{\lambda/\mu,f}(z|\bfb)=\det\left(e_{\lambda_i-\mu_j+j-i}^{[f_i|\lambda_i-\mu_j+j-i-f_i-1]}(z|\tau^{j-\mu_j-1}b)\right)_{1\leq i,j\leq r}.
\]
\end{thm}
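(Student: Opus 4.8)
The plan is to read off $\widetilde{s}_{\lambda/\mu,f}(z|\bfb)$ as the generating function of a family of non-intersecting lattice paths and convert it into the asserted determinant via the Lindstr\"om--Gessel--Viennot--Stembridge theorem (Theorem \ref{appAthm}). With the ordered vertex sets $u_i=(\lambda_i-i,f_i)$ and $v_i=(\mu_i-i,0)$ fixed above, equation (\ref{appeq1}) already gives $\widetilde{s}_{\lambda/\mu,f}(z|\bfb)=\GF[\scP_0(\bfu,\bfv)]$. Provided $\bfu$ is $D$-compatible with $\bfv$, Theorem \ref{appAthm} turns this into $\det\bigl(\GF[\scP(u_i,v_j)]\bigr)_{1\le i,j\le r}$, and it remains only to identify each entry. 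This is where I expect the real work to lie: the compatibility verification is the one place the hypothesis $\lambda_i-i-f_i\ge\lambda_j-j-f_j$ $(i<j)$ is consumed.

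The entries are immediate from Lemma \ref{lemApp1}. Applying it to the source $u_i=(s-1,f_i)$ and sink $v_j=(t-1,0)$ with $s=\lambda_i-i+1$ and $t=\mu_j-j+1$ gives $s-t=\lambda_i-\mu_j+j-i$ and $\tau^{-t}=\tau^{j-\mu_j-1}$, hence
\[
\GF[\scP(u_i,v_j)]=e_{\lambda_i-\mu_j+j-i}^{[f_i|\lambda_i-\mu_j+j-i-f_i-1]}(z|\tau^{j-\mu_j-1}b),
\]
which is exactly the $(i,j)$-entry of the matrix in the statement. Substituting these into the determinant produced by Theorem \ref{appAthm} yields the formula.

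The main obstacle is therefore the compatibility claim: for $i<k$ and $j>l$, every $P\in\scP(u_i,v_j)$ and $Q\in\scP(u_k,v_l)$ must share a vertex (when both path sets are nonempty; otherwise the condition is vacuous). I would record each path by its horizontal coordinate $x(h)$ at height $h$, which is well defined since exactly one vertex sits at each integer height between the sink $(h=0)$ and the source. As the height increases by one, $x$ increases by $0$ or $1$ (a vertical resp.\ diagonal edge), so $x$ is weakly increasing and changes by at most one per step, with $x(0)$ the sink abscissa and $x(f)$ the source abscissa. At the bottom, $x_P(0)=\mu_j-j<\mu_l-l=x_Q(0)$ because $\mu_i-i$ is strictly decreasing, so $P$ begins strictly to the left of $Q$.

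I then compare the two paths at the top $H=\min(f_i,f_k)$ of their common height range. When $f_i<f_k$, monotonicity gives $x_Q(H)=x_Q(f_i)\le x_Q(f_k)=\lambda_k-k<\lambda_i-i=x_P(f_i)$, using only $\lambda_i-i>\lambda_k-k$. When $f_i\ge f_k$, descending $f_i-f_k$ steps from the source of $P$ forces $x_P(f_k)\ge(\lambda_i-i)-(f_i-f_k)$, and rewriting the hypothesis as $\lambda_i-i-f_i+f_k\ge\lambda_k-k$ yields $x_P(H)=x_P(f_k)\ge\lambda_k-k=x_Q(f_k)=x_Q(H)$. In both cases the integer gap $g(h)=x_P(h)-x_Q(h)$ satisfies $g(0)<0$ and $g(H)\ge0$ while changing by at most one per step, so it vanishes at some height $h^\ast\le H$, where the two paths occupy the common vertex $(x_P(h^\ast),h^\ast)$. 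This establishes $D$-compatibility and completes the argument.
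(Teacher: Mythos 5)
Your proof is correct and takes essentially the same route as the paper: equation (\ref{appeq1}), then Stembridge's theorem (Theorem \ref{appAthm}), then Lemma \ref{lemApp1} with the same substitutions $s=\lambda_i-i+1$, $t=\mu_j-j+1$ to identify the matrix entries. The only difference is that you verify the $D$-compatibility of $\bfu$ with $\bfv$ in detail (the height-by-height gap argument, splitting on $f_i<f_k$ versus $f_i\geq f_k$), a step the paper asserts without proof; your verification is correct and correctly isolates where the hypothesis $\lambda_i-i-f_i\geq\lambda_j-j-f_j$ is used.
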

\begin{proof}
By the assumption, it follows that $\bfu$ is $D$-compatible with $\bfv$. Thus we can apply Theorem \ref{appAthm} to the right hand side of (\ref{appeq1}), and obtain
\[
\widetilde{s}_{\lambda/\mu,f}(z|\bfb) = \det \left(\GF\left[\scP(u_i,v_j)\right]\right)_{1\leq i,j\leq r}.
\]
Now the claim follows by applying Lemma \ref{lemApp1} with $u=u_i=(\lambda_i-i,f_i)$ and $v=v_j=(\mu_j-j,0)$ so that $f=f_i$, $s-t=\lambda_i-\mu_j+j-i$, and $t=\mu_j-j+1$.
\end{proof}

\bibliography{references}{}

\def\cprime{$'$}
\begin{thebibliography}{10}

\bibitem{AndersonFulton}
{\sc {Anderson}, D., and {Fulton}, W.}
\newblock {Degeneracy Loci, Pfaffians, and Vexillary Signed Permutations in
  Types B, C, and D}.
\newblock {\em ArXiv e-prints\/} (Oct. 2012).

\bibitem{AndersonFulton2}
{\sc Anderson, D., and Fulton, W.}
\newblock Chern class formulas for classical-type degeneracy loci.
\newblock {\em Compos. Math. 154}, 8 (2018), 1746--1774.

\bibitem{AndersonFultonVex}
{\sc {Anderson}, D., and {Fulton}, W.}
\newblock {Vexillary signed permutations revisited}.
\newblock {\em ArXiv e-prints\/} (June 2018).

\bibitem{BiedenharnLouck}
{\sc Biedenharn, L.~C., and Louck, J.~D.}
\newblock A new class of symmetric polynomials defined in terms of tableaux.
\newblock {\em Adv. in Appl. Math. 10}, 4 (1989), 396--438.

\bibitem{BilleyHaiman}
{\sc Billey, S., and Haiman, M.}
\newblock Schubert polynomials for the classical groups.
\newblock {\em J. Amer. Math. Soc. 8}, 2 (1995), 443--482.

\bibitem{BuchKreschTamvakis1}
{\sc Buch, A.~S., Kresch, A., and Tamvakis, H.}
\newblock A {G}iambelli formula for isotropic {G}rassmannians.
\newblock {\em Selecta Math. (N.S.) 23}, 2 (2017), 869--914.

\bibitem{ChenLiLouck}
{\sc Chen, W. Y.~C., Li, B., and Louck, J.~D.}
\newblock The flagged double {S}chur function.
\newblock {\em J. Algebraic Combin. 15}, 1 (2002), 7--26.

\bibitem{FlagsFulton}
{\sc Fulton, W.}
\newblock Flags, {S}chubert polynomials, degeneracy loci, and determinantal
  formulas.
\newblock {\em Duke Math. J. 65}, 3 (1992), 381--420.

\bibitem{GesselViennot}
{\sc {G}essel, and {V}iennot}.
\newblock {D}eterminants, {P}aths, and {P}lane partitions (1989 preprint).

\bibitem{GesselViennot2}
{\sc Gessel, I., and Viennot, G.}
\newblock Binomial determinants, paths, and hook length formulae.
\newblock {\em Adv. in Math. 58}, 3 (1985), 300--321.

\bibitem{Ikeda2007}
{\sc Ikeda, T.}
\newblock Schubert classes in the equivariant cohomology of the {L}agrangian
  {G}rassmannian.
\newblock {\em Adv. Math. 215}, 1 (2007), 1--23.

\bibitem{IkedaMatsumura}
{\sc Ikeda, T., and Matsumura, T.}
\newblock Pfaffian sum formula for the symplectic {G}rassmannians.
\newblock {\em Math. Z. 280}, 1-2 (2015), 269--306.

\bibitem{IkedaMihalceaNaruse}
{\sc Ikeda, T., Mihalcea, L.~C., and Naruse, H.}
\newblock Double {S}chubert polynomials for the classical groups.
\newblock {\em Adv. Math. 226}, 1 (2011), 840--886.

\bibitem{Ivanov04}
{\sc Ivanov, V.~N.}
\newblock Interpolation analogues of {S}chur {$Q$}-functions.
\newblock {\em Zap. Nauchn. Sem. S.-Peterburg. Otdel. Mat. Inst. Steklov.
  (POMI) 307}, Teor. Predst. Din. Sist. Komb. i Algoritm. Metody. 10 (2004),
  99--119, 281--282.

\bibitem{Kazarian}
{\sc Kazarian, M.}
\newblock On lagrange and symmetric degeneracy loci.
\newblock {\em Isaac Newton Institute for Mathematical Sciences Preprint
  Series\/} (2000).

\bibitem{Lambert}
{\sc {Lambert}, J.}
\newblock {Theta-vexillary signed permutations}.
\newblock {\em ArXiv e-prints\/} (July 2018).

\bibitem{ClassesLascoux}
{\sc Lascoux, A.}
\newblock Classes de {C}hern des vari\'et\'es de drapeaux.
\newblock {\em C. R. Acad. Sci. Paris S\'er. I Math. 295}, 5 (1982), 393--398.

\bibitem{SchubertLascoux}
{\sc Lascoux, A., and Sch{\"u}tzenberger, M.-P.}
\newblock Polyn\^omes de {S}chubert.
\newblock {\em C. R. Acad. Sci. Paris S\'er. I Math. 294}, 13 (1982), 447--450.

\bibitem{LascouxSchutzenberger1985}
{\sc Lascoux, A., and Sch\"{u}tzenberger, M.-P.}
\newblock Interpolation de {N}ewton \`a plusieurs variables.
\newblock In {\em S\'{e}minaire d'alg\`ebre {P}aul {D}ubreil et {M}arie-{P}aule
  {M}alliavin, 36\`eme ann\'{e}e ({P}aris, 1983--1984)}, vol.~1146 of {\em
  Lecture Notes in Math.} Springer, Berlin, 1985, pp.~161--175.

\bibitem{Lindstrom}
{\sc Lindstr\"{o}m, B.}
\newblock On the vector representations of induced matroids.
\newblock {\em Bull. London Math. Soc. 5\/} (1973), 85--90.

\bibitem{MacdonaldHall}
{\sc Macdonald, I.~G.}
\newblock {\em Symmetric functions and {H}all polynomials}, second~ed.
\newblock Oxford Mathematical Monographs. The Clarendon Press, Oxford
  University Press, New York, 1995.
\newblock With contributions by A. Zelevinsky, Oxford Science Publications.

\bibitem{PragaczPQ}
{\sc Pragacz, P.}
\newblock Algebro-geometric applications of {S}chur {$S$}- and
  {$Q$}-polynomials.
\newblock In {\em Topics in invariant theory ({P}aris, 1989/1990)}, vol.~1478
  of {\em Lecture Notes in Math.} Springer, Berlin, 1991, pp.~130--191.

\bibitem{Schur}
{\sc Schur, I.}
\newblock {\"U}ber die {D}arstellung der symmetrischen und der alternierenden
  {G}ruppe durch gebrochene lineare {S}ubstitutionen.
\newblock {\em J. reine angew. Math.}, 139 (1911), 155--250.

\bibitem{StembridgePf}
{\sc Stembridge, J.~R.}
\newblock Nonintersecting paths, {P}faffians, and plane partitions.
\newblock {\em Adv. Math. 83}, 1 (1990), 96--131.

\bibitem{Tamvakis2011Crelle}
{\sc Tamvakis, H.}
\newblock Giambelli, {P}ieri, and tableau formulas via raising operators.
\newblock {\em J. Reine Angew. Math. 652\/} (2011), 207--244.

\bibitem{Wachs}
{\sc Wachs, M.~L.}
\newblock Flagged {S}chur functions, {S}chubert polynomials, and symmetrizing
  operators.
\newblock {\em J. Combin. Theory Ser. A 40}, 2 (1985), 276--289.

\bibitem{WilsonThesis}
{\sc Wilson, V.}
\newblock {E}quivariant {G}iambelli {F}ormulae for {G}rassmannians.
\newblock Ph.D. thesis. University of Maryland (2010).

\end{thebibliography}
\bibliographystyle{acm}
\end{document}